\newtheorem{thm}{Theorem}[section]
\newtheorem{prop}{Proposition}[section]
\newtheorem{remark}{Remark}[section]
\newcommand{\norm}[1]{\lVert#1\rVert}
\newcommand{\vnorm}[1]{\left\lVert#1\right\rVert}
\newcommand{\abs}[1]{\left\lvert#1\right\rvert}
\newcommand{\bmath}[1]{\mbox{\boldmath{$#1$}}}
\newcommand{\be}{\begin{equation}}
\newcommand{\ee}{\end{equation}}
\newcommand{\rd}{\mathrm{d}}
\newcommand{\dA}{\mathrm{d}A}
\newcommand{\Vol}{\mathrm{Vol}}
\newcommand{\nn}{\nonumber}
\renewcommand{\vec}[1]{\mathbf{#1}}
\title{A structure-preserving parametric finite
element method for surface diffusion}
\author{Weizhu Bao\thanks{Department of Mathematics, National University of Singapore, Singapore, 119076 (matbaowz@nus.edu.sg). This author's research was supported by the Ministry of Education of Singapore grant
MOE2019-T2-1-063 (R-146-000-296-112).}
\and Quan Zhao\thanks{Department of Mathematics, National University of Singapore, Singapore 119076 (quanzhao90@u.nus.edu). This author's research was supported by the Ministry of Education
of Singapore grant R-146-000-285-114.}
}
\date{}
\numberwithin{equation}{section}
\begin{document}
\maketitle

\begin{abstract}
We propose a structure-preserving parametric finite element method (SP-PFEM) for discretizing the surface diffusion of a closed curve in two dimensions (2D) or surface in three dimensions (3D). Here the ``structure-preserving'' refers to preserving the two fundamental geometric structures of the surface diffusion flow: (i) the conservation of the area/volume enclosed by the closed curve/surface, and (ii) the decrease of the perimeter/total surface area of the curve/surface. For simplicity of notations, we begin with the surface diffusion of a closed curve in 2D and present a weak (variational) formulation of the governing equation. Then we discretize the variational formulation by using the backward Euler method in time and piecewise linear parametric finite elements in space, with a
proper approximation of the unit normal vector by using the information of the curves at the current and next time step. The constructed numerical method is shown to preserve the two geometric structures and also enjoys the good property of asymptotic equal mesh distribution. The proposed SP-PFEM is ``weakly'' implicit (or almost semi-implicit) and the nonlinear system at each time step can be solved very efficiently and accurately by the Newton's iterative method.  The SP-PFEM is then extended to discretize the surface diffusion of a closed surface in 3D. Extensive numerical results, including convergence tests, structure-preserving property and asymptotic equal mesh distribution, are reported to demonstrate the accuracy and efficiency of the proposed SP-PFEM for simulating surface diffusion in 2D and 3D.
\end{abstract}


\begin{keywords} Surface diffusion, parametric finite element method,
structure-preserving, area/volume conservation, perimeter/total surface area
dissipation, unconditional stability
\end{keywords}

\begin{AMS}
65M60, 65M12, 35K55, 53C44
\end{AMS}

\pagestyle{myheadings} \markboth{W.~Bao and Q.~Zhao}
{Structure-preserving PFEM for surface diffusion}

\section{Introduction}


Surface diffusion is a general process involving the motion of adatoms, molecules, and atomic clusters at solid material surfaces \cite{Oura2013surface}. It is an important transport mechanism or kinetic pathway in  epitaxial growth, surface phase formation, heterogeneous catalysis and other areas in surface sciences \cite{Shustorovich1991}. In fact,
surface diffusion has found broader and significant applications in materials science and solid-state physics, such as the crystal growth of nanomaterials \cite{Gomer1990, Gilmer1972simulation} and solid-state dewetting~\cite{Srolovitz86, Wang15, Jiang19c}.

\begin{figure}[t]
\centering
\includegraphics[width=0.75\textwidth]{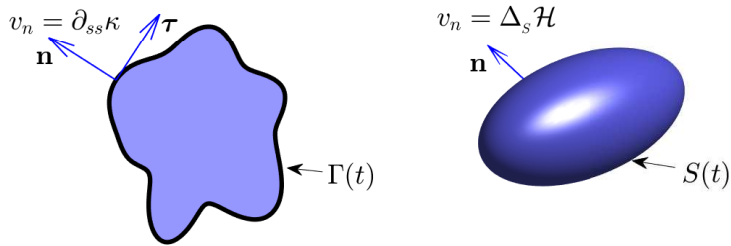}
\caption{A schematic illustration of surface diffusion of a closed curve $\Gamma(t)$ in 2D (left panel) and a closed surface $S(t)$ in 3D (right panel), where $\vec n$ is the outward unit normal vector, and $\bmath{\tau}$ represents the unit tangential vector of the curve in 2D.}
\label{fig:SurfaceDiffusion}
\end{figure}


To describe the evolution of microstructure in polycrystalline materials, Mullins firstly developed a mathematical formulation for surface diffusion  \cite{Mullins57}. As is shown in Fig.~\ref{fig:SurfaceDiffusion}, the motion by surface diffusion for a closed curve in two dimensions (2D) or a closed surface in three dimensions (3D) is governed by the following geometric evolution equations
\cite{Mullins57, Cahn94}
\begin{subequations}
\begin{numcases}{v_n=}
\label{eq:sf2d}
\partial_{ss}\kappa,&\text{in 2D},\\[0.3em]
\Delta_{_S}\mathcal{H},&\text{in 3D},
\label{eq:sf3d}
\end{numcases}
\end{subequations}
where $v_n$ is the normal velocity, $\kappa$ represents the curvature of the 2D curve with $s$ being the arc length parameter, $\mathcal{H}$ represents the mean curvature of the 3D surface with $\Delta_{_S}$ denoting the Laplace-Beltrami operator on the surface, i.e. $\Delta_{_S}:=\nabla_{_S}\cdot\nabla_{_S}$ with
$\nabla_{_S}$ denoting the surface gradient operator. It is well-known that surface diffusion has the following two essential geometric properties:
\begin{itemize}
\item [(1)]  the area of the region enclosed by the 2D curve and the volume of the region enclosed by the 3D surface are conserved;
\item [(2)] the perimeter of the 2D curve and the total surface area of the 3D surface decrease in time.
\end{itemize}
More precisely, motion by surface diffusion is the $H^{-1}$-gradient flow of the perimeter or surface area functional \cite{Mayer01}. Theoretical investigations of surface diffusion flow about the regularity and well-posedness of solutions can be found in \cite{Elliott97,Giga98,Escher98} and references therein. For numerical approximations, it is desirable to preserve the two fundamental geometric properties.


Much numerical effort has been devoted for simulating the evolution of a 2D curve or 3D surface under surface diffusion flow. Most of the early works were focused on the surface diffusion of graphs, in which the curve\slash surface is represented by a height function. In \cite{Coleman96}, a space-time finite element method for axially symmetric surfaces is developed, and the method conserves the volume and decreases the surface area. In \cite{Bansch04}, B\"ansch, Morin and Nochetto presented a weak formulation for graphs together with priori error estimates for the semi-discrete discretization. In particular, the fully discrete approximation satisfies the conservation of the enclosed volume and the decrease of the surface area. This work was later extended to the anisotropic case by Deckelnick, Dziuk and Elliott in \cite{Deckelnick05fully}. In \cite{Xu09}, Xu and Shu presented a local discontinuous Galerkin finite element method.


Recently different numerical methods have been proposed and analyzed for general curves/surfaces via different formulations and/or parametric variables. Numerical approximations in the framework of finite difference method can be found in \cite{Wong00,Mayer01,Wang15} and references therein, and the property of area/volume conservation is not considered for the corresponding discretized solution. {   Numerical approximation based on parametric formulation of surface diffusion of closed curves are considered in \cite{Dziuk2002}}. In \cite{Bansch05}, B\"ansch, Morin and Nochetto developed a finite element method for surface diffusion flow
via a complicated variational formulation and proper parametric variables. The numerical method decreases the surface area in time, but does not preserve the enclosed volume in the full discretization. In these numerical works, mesh regularisation/smoothing algorithms or artificial tangential velocities are generally required to prevent the possible mesh distortion. Based on the previous works \cite{Dziuk90,Barrett20}, Barrett, Garcke, and N\"urnberg (denoted as BGN) introduced a novel weak formulation for surface diffusion equation and presented an elegant semi-implicit parametric finite element method (PFEM) \cite{Barrett07, Barrett08JCP,Barrett19}. The PFEM is unconditionally stable by decreasing the perimeter/surface area and has the good property with respect to the mesh points distribution. Nevertheless, the fully discretized approximation fails to conserve the enclosed volume. Very recently, an area-conserving and perimeter-decreasing PFEM is proposed in \cite{Jiang2021} for a closed curve in 2D. {    In the PFEM, the unit normal and tangential vector are approximated on average in order to preserve the two geometric properties for the discretized solutions.}  However, the method is fully implicit and the mesh quality is not well preserved during time evolution. For more related works, we refer the readers to \cite{Barrett07Ani, Hausser07,Dziuk13, Bao17,Zhao19b,Zhao20, Kovacs2020} and references therein.


The main aim of this paper is to design a structure-preserving parametric finite element method (SP-PFEM) for the surface diffusion flow so that the two underlying geometric properties are well preserved in the discretized approximation. The work is based on the discretization of the weak formulation in \cite{Barrett07, Barrett08JCP}. We follow the previous works by adopting the backward Euler  method {    with an explicit treatment of the surface integrals} in time and piecewise linear elements in space, except in the numerical treatment of the unit normal vector. {    Precisely, in a similar manner to the discretization in \cite{Jiang2021}}, we approximate the unit normal vector semi-implicitly by using the information at the current and next time step. With this treatment, the obtained PFEM not only inherits the good properties of the original PFEMs by BGN in \cite{Barrett07, Barrett08JCP} such as the unconditional stability and the good mesh distribution, but also achieves the exact conservation of the area/volume in 2D/3D. The proposed method is ``weakly'' implicit (or almost semi-implicit). That is,  there is only one nonlinear term in each equation of the system, and in particular this nonlinear term is a polynomial of degree up to two and three in 2D and 3D, respectively. Thus the SP-PFEM can be solved very efficiently by the Newton's method.


The rest of the paper is organized as follows. In \cref{sec:2dcase}, we begin  with the surface diffusion flow of a closed curve in 2D, review a weak formulation, propose a SP-PFEM with detailed proof of its area conservation and perimeter dissipation, and finally  present an iterative method for solving the resulting nonlinear system. In \cref{se:Strupfem}, we extend our SP-PFEM to the surface diffusion of a closed surface in 3D.  Extensive  numerical results are reported in \cref{se:num}, and finally some conclusions are drawn in \cref{se:con}.


\section{For closed curve evolution in 2D}
\label{sec:2dcase}

In this section, we are focused on the surface diffusion flow of a closed curve in 2D (cf. Fig. \ref{fig:SurfaceDiffusion} left). We parameterize the evolution curves $\Gamma(t)$ as
\begin{align}
\vec X(\rho,~t):=(x_1(\rho,~t),~x_2(\rho,~t))^T:\mathbb{I}\times[0,T]\to\mathbb{R}^2,\nn
\end{align}
where $\mathbb{I}=\mathbb{R}\slash\mathbb{Z}=[0,~1]$ is the periodic unit interval. The arc length parameter $s$ is then computed by $s(\rho,t)=\int_0^\rho \abs{\partial_q\vec{X}}\;\rd q$ with $\partial_\rho s=\abs{\partial_\rho\vec{X}}$. We  then rewrite \eqref{eq:sf2d} into the following coupled second-order nonlinear geometric partial differential equations (PDEs)
\begin{subequations}
\label{eq:2dsf}
\begin{numcases}{}
\label{eq:2dsf1}
\vec n\cdot\partial_t\vec X = \partial_{ss}\kappa,\\[0.3em]
\kappa\,\vec n = -\partial_{ss}\vec X,
\label{eq:2dsf2}
\end{numcases}
\end{subequations}
where $\vec n:= -(\partial_s\vec X)^\perp$ is the outward unit normal vector with $(\cdot)^\perp$ being the clockwise rotation by $\frac{\pi}{2}$, i.e., $[(a,~b)^T]^{\perp}=(-b,~a)^T$. {    We recall that surface diffusion in 2D is the $H^{-1}$ gradient flow of the perimeter of the 2D curve, and has two essential geometric structures, i.e. area conservation and perimeter dissipation. Specifically, let $A(t)$ be the area of the enclosed region by $\Gamma(t)$ and $L(t)$ be the perimeter, then the two geometric structures for the dynamic system imply}
\begin{subequations}
\begin{align}
\label{eq:arealaw}
&\frac{\rd }{\rd t}A(t)=\int_{\Gamma(t)}(\partial_t\vec X\cdot\vec n)\,\rd s = \int_{\Gamma(t)}\partial_{ss}\kappa\,\rd s \equiv 0,\quad t\geq 0,\\
&\frac{\rd }{\rd t}L(t)=\int_{\Gamma(t)}\,(\partial_t\vec X\cdot\vec n)\,\kappa\,\rd s = -\int_{\Gamma(t)}\abs{\partial_s\kappa}^2\,\rd s\leq 0,\quad t\geq 0.
\label{eq:perimeterlaw}
\end{align}
\end{subequations}
%


\subsection{The weak formulation}
\label{sec:wf2d}

To obtain the weak formulation, we define the function space with respect to $\Gamma(t)$ as
\be
L^2(\mathbb{I}):=\Bigl\{u: \mathbb{I}\rightarrow \mathbb{R}, \;\text{and} \int_{\Gamma(t)}|u(s)|^2 \rd s
=\int_{\mathbb{I}} |u(s(\rho,~t))|^2 \partial_\rho s\, \rd\rho <+\infty \Bigr\},
\ee
equipped with the $L^2$-inner product
\be
\big(u,v\big)_{\Gamma(t)}:=\int_{\Gamma(t)}u(s)v(s)\,\rd s=
\int_{\mathbb{I}}u(s(\rho,t))v(s(\rho,t)) \partial_\rho s\,\rd\rho,
\ee
for any scalar (or vector-valued) functions $u,v\in L^2(\mathbb{I})$. We define the Sobolev spaces

\begin{align}
{    H^1(\mathbb{I}):=\left\{u: \mathbb{I}\rightarrow \mathbb{R}, \;{\rm and}\;u\in L^2(\mathbb{I}),\; \partial_{\rho} u\in L^2(\mathbb{I})\right\}.}
\end{align}

The weak formulation of Eq. \eqref{eq:2dsf} can be stated as follows \cite{Barrett07}: Given the initial curve $\Gamma(0)=\vec X(\mathbb{I},~0)$, for $t>0$, we find the evolution curves $\Gamma(t)=\vec X(\cdot,~t)\in [H^1(\mathbb{I})]^2$ and the curvature $\kappa(\cdot,~t)\in H^1(\mathbb{I})$ such that
\begin{subequations}
\label{eqn:2dweak}
\begin{align}
\label{eqn:2dweak1}
&\Bigl(\vec n\cdot\partial_t\vec X,~\psi\Bigr)_{\Gamma(t)}+\Bigl(\partial_s\kappa,~\partial_s\psi\Bigr)_{\Gamma(t)}=0,\quad\forall \psi\in H^1(\mathbb{I}),\\[0.5em]
\label{eqn:2dweak2}
&\Bigl(\kappa,~\vec n\cdot\boldsymbol{\omega}\Bigr)_{\Gamma(t)}-\Bigl(\partial_s\vec X,~\partial_s\boldsymbol{\omega}\Bigr)_{\Gamma(t)} = 0,\quad\forall\boldsymbol{\omega}\in[H^1(\mathbb{I})]^2.
\end{align}
\end{subequations}
Note here Eq.~\eqref{eqn:2dweak1} is obtained by taking inner product of \eqref{eq:2dsf1} with a test function
$\psi$, and applying the integration by parts. Similar approach to \eqref{eq:2dsf2} with a vector test function $\bmath{\omega}$, we obtain \eqref{eqn:2dweak2}.


\subsection{The discretization}
\label{sec:pfem2d}

Take $\tau>0$ as the uniform time step size and denote the discrete time levels as $t_m=m\tau$ for $m\geq 0$. Let $N\ge3$ be a positive integer and denote $h = 1/N$. Then a uniform partition of the reference domain $\mathbb{I}$ is given by $\mathbb{I}=\bigcup_{j=1}^N\mathbb{I}_j$, where $\mathbb{I}_j = [\rho_{j-1},~\rho_j]$ for $j=1,2,\ldots,N$ with $\rho_j = j\,h$ for $j=0,1,\ldots,N$.  Define the finite element space as
\begin{equation*}
V^h(\mathbb{I}):=\left\{u\in C(\mathbb{I}):\; u\mid_{\mathbb{I}_{j}}\in \mathcal{P}^1(\mathbb{I}_j),\; \forall \, j=1,2,\ldots,N\right\}\subseteq H^1(\mathbb{I}),
\end{equation*}
where $\mathcal{P}^1(\mathbb{I}_j)$ denotes the space of polynomials with degree at most $1$ over the subinterval $\mathbb{I}_j$. Let $\Gamma^m:=\vec X^m(\rho)=(x^m(\rho),~y^m(\rho))^T\in [V^h(\mathbb{I})]^2$ be the numerical approximation of the solution $\vec X(\cdot,t_m)$.
Then $\{\Gamma^m\}_{m\geq 0}$ are a sequence of polygonal curves consisting of connected line segments. In order to have non-degenerate meshes, we shall assume that the polygonal curves satisfy
\begin{equation}\label{hjt}
\min_{1\leq j\leq N} |\vec h_j^m|>0, \quad \hbox{with}
\quad \vec h_j^m=\vec X^m(\rho_j)-\vec X^m(\rho_{j-1}),\quad m\geq 0,
\end{equation}
where $|\vec h_j^m|$ is the length of $\vec h_j^m$ for $j=1,2,\ldots,N$.

For two piecewise continuous functions $u,v$ defined on the interval $\mathbb{I}$ with possible jumps at the nodes $\{\rho_j\}_{j=1}^{N}$, we define the mass lumped inner product $\big(\cdot,\cdot\big)_{\Gamma^m}^h$ (composite trapezoidal rule):
\begin{equation}
\label{eq:norm2d}
\big(u,~v\big)_{\Gamma^m}^h:=\frac{1}{2}\sum_{j=1}^{N}\abs{\vec h_j^m}\left[\big(u\cdot v\big)(\rho_j^-)+\big(u\cdot v\big)(\rho_{j-1}^+)\right],
\end{equation}
where $u(\rho_j^\pm)=\lim\limits_{\rho\to \rho_j^\pm} u(\rho)$ are the one-sided limits.

Let $\kappa^{m+1}\in V^h(\mathbb{I})$ be the numerical approximation of the curvature of $\Gamma^{m+1}$. We propose the full discretization
of the weak formulation in \eqref{eqn:2dweak} as follows:  Given the initial curve $\Gamma^0:=\vec X^{0}(\cdot)\in [V^h(\mathbb{I})]^2$, for $m\geq 0$, we seek the evolution curves $\Gamma^{m+1}:=\vec X^{m+1}(\cdot)\in [V^h(\mathbb{I})]^2$ and the curvature $\kappa^{m+1}(\cdot)\in V^h(\mathbb{I})$ such that the following two equations hold
\begin{subequations}
\label{eqn:dis2d}
\begin{align}
\label{eqn:dis2d1}
&\Bigl(\frac{\vec X^{m+1} - \vec X^m}{\tau}\cdot\vec n^{m+\frac{1}{2}},~\psi^h\Bigr)_{\Gamma^{m}}^h + \Bigl(\partial_s\kappa^{m+1},~\partial_s\psi^h\Bigr)_{\Gamma^m} = 0,\quad\forall\psi^h\in V^h(\mathbb{I}),\\[0.3em]
\label{eqn:dis2d2}
&\Bigl(\kappa^{m+1},~\vec n^{m+\frac{1}{2}}\cdot\boldsymbol{\omega}^h\Bigr)_{\Gamma^{m}}^h - \,\Bigl(\partial_s\vec X^{m+1},~\partial_s\boldsymbol{\omega}^h\Bigr)_{\Gamma^m}=0,
\quad\forall\boldsymbol{\omega}^h\in [V^h(\mathbb{I})]^2,
\end{align}
\end{subequations}
where $s$ is the arc length of $\Gamma^m$ and $\vec n^{m+\frac{1}{2}}$ is defined as
\begin{align}
\vec n^{m+\frac{1}{2}} = -\frac{1}{2}\left(\partial_s\vec X^m + \partial_s\vec X^{m+1}\right)^\perp=-\frac{1}{2}|\partial_{\rho}\vec X^m|^{-1}\left(\partial_{\rho}\vec X^m +\partial_{\rho}\vec X^{m+1}\right)^\perp,
\label{eq:2dseminormal}
\end{align}
and for any $f\in V^h(\mathbb{I})$, we compute its derivative with respect to the arc length parameter on $\Gamma^m$ as $\partial_sf = |\partial_{\rho}\vec X^m|^{-1}\partial_{\rho}f$.

 We will show in \cref{sec:property2d} that the  approximation of $\vec n $ using \eqref{eq:2dseminormal} contributes to the property of area conservation. The discretization is ``weakly implicit'' with only one nonlinear term introduced in \eqref{eqn:dis2d1} and \eqref{eqn:dis2d2}, respectively. In particular, the nonlinear term is a polynomial function of degree at most two with respect to the components of $\vec X^{m+1}$ and $\kappa^{m+1}$. {     We note that in \cite{Barrett07}, the unit normal is approximated explicitly by $\vec n^m:=-(\partial_s\vec X^m)^\perp$, which leads to a system of linear algebraic equations. Besides, a fully implicit PFEM for surface diffusion of closed curves is studied in \cite{Barrett2011}. However, these two methods do not preserve the enclosed area in the discretized level, e.g. the error to the area is at first-order accurate with respect to the time step size $\tau$ and is at second-order accurate with respect to the mesh size $h$.  }

 \begin{remark} The first terms in \eqref{eqn:dis2d1} and \eqref{eqn:dis2d2} are approximated using the mass lumped inner product \eqref{eq:norm2d} in order to maintain the asymptotic equal mesh distribution \cite{Barrett07, Zhao20}. Therefore, no re-meshing for the polygonal curve is needed during the time evolution.
 \end{remark}

{
\begin{remark}
In \cite{Jiang2021}, Jiang and Li proposed a new variational formulation for surface diffusion of a 2D curve and approximated the unit normal using a similar formulation in \eqref{eq:2dseminormal} so that the property of area conservation is achieved. Nevertheless, their numerical method is fully implicit and the mesh quality is not well preserved during the time evolution.
\end{remark}
}


\subsection{Area conservation and perimeter dissipation}
\label{sec:property2d}

For simplicity, denote $\vec X^m(\rho_j) = (x_j^m, ~y_j^m)^T$ for $j=0,1,\cdots, N$. We let $A^m$ be the total enclosed area and $L^m$ be the perimeter of $\Gamma^m$, then they can be written as
\begin{align}
\label{eq:disAreaLength}
A^m:=\frac{1}{2}\sum_{j=1}^N(x_j^m-x_{j-1}^m)(y_j^m+y_{j-1}^m),\qquad L^m:=\sum_{j=1}^N\abs{\vec h_j^m},\qquad m \geq 0,
\end{align}
where $\vec h_j^m$ is defined in \eqref{hjt}.

{    Similar to the work in \cite{Jiang2021}, we can prove the exact area conservation for the numerical method \eqref{eqn:dis2d}.}


\begin{thm}[Area conservation] \label{thm:2darea} Let $\Bigl(\vec X^{m+1}(\cdot),~\kappa^{m+1}(\cdot)\Bigr)$ be {    a} numerical solution of the numerical method \eqref{eqn:dis2d}. Then it holds
\begin{align}
\label{eq:areaprop}
A^{m+1} = A^m\equiv A^0,\qquad m\geq 0.
\end{align}
\end{thm}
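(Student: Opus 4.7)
The plan is to exploit the weak equation \eqref{eqn:dis2d1} with the constant test function $\psi^h\equiv1$, reducing the claim to a purely geometric identity between the mass-lumped discrete normal velocity and the area difference $A^{m+1}-A^m$. The crucial point is that the semi-implicit normal \eqref{eq:2dseminormal} is tailored precisely so that this identity holds exactly.

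First I would substitute $\psi^h\equiv 1\in V^h(\mathbb I)$ into \eqref{eqn:dis2d1}. Since $\partial_s 1\equiv0$, the curvature-flux term vanishes and I obtain
\[
\Bigl(\frac{\vec X^{m+1}-\vec X^m}{\tau}\cdot \vec n^{m+\frac12},\,1\Bigr)_{\Gamma^m}^h=0.
\]
Expanding via \eqref{eq:norm2d} and noting that, by \eqref{eq:2dseminormal}, $\vec n^{m+\frac12}$ is constant on each $\mathbb I_j$ with edge value $\vec n_j^{m+\frac12}=-\tfrac{1}{2|\vec h_j^m|}(\vec h_j^m+\vec h_j^{m+1})^\perp$, this collapses to
\[
\sum_{j=1}^N |\vec h_j^m|\,\vec n_j^{m+\frac12}\cdot\bigl[(\vec X^{m+1}(\rho_j)-\vec X^m(\rho_j))+(\vec X^{m+1}(\rho_{j-1})-\vec X^m(\rho_{j-1}))\bigr]=0.
\]

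Next I would establish the geometric identity
\[
A^{m+1}-A^m\;=\;\pm\tfrac12\sum_{j=1}^N |\vec h_j^m|\,\vec n_j^{m+\frac12}\cdot\bigl[(\vec X^{m+1}(\rho_j)-\vec X^m(\rho_j))+(\vec X^{m+1}(\rho_{j-1})-\vec X^m(\rho_{j-1}))\bigr].
\]
To prove it I would compute $A^{m+1}-A^m$ from the shoelace formula \eqref{eq:disAreaLength} and observe that, after the cross-time terms on the shared vertices cancel across consecutive strips, the difference equals the sum of the signed areas of the $N$ quadrilateral strips with vertices $\vec X^m(\rho_{j-1}),\vec X^m(\rho_j),\vec X^{m+1}(\rho_j),\vec X^{m+1}(\rho_{j-1})$ swept between $\Gamma^m$ and $\Gamma^{m+1}$. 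Splitting each strip into two triangles along a diagonal, the signed area of the $j$-th strip simplifies to $\tfrac14(\vec h_j^m+\vec h_j^{m+1})\times\bigl[(\vec X^{m+1}(\rho_j)-\vec X^m(\rho_j))+(\vec X^{m+1}(\rho_{j-1})-\vec X^m(\rho_{j-1}))\bigr]$, where $\times$ denotes the $2$D scalar cross product. The relation $\vec a\times\vec b=\vec a^\perp\cdot\vec b$ together with the definition of $\vec n_j^{m+\frac12}$ puts this in the displayed form.

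Combining the two identities yields $A^{m+1}-A^m=0$, and \eqref{eq:areaprop} then follows by induction on $m$. The main obstacle is the shoelace-based geometric identity: the averaging of consecutive tangents in \eqref{eq:2dseminormal} is precisely what makes the swept-strip area coincide exactly (not merely to leading order) with the mass-lumped normal velocity, so its verification is a careful bookkeeping of cross products. Notably, neither the second weak equation \eqref{eqn:dis2d2} nor the value of $\kappa^{m+1}$ plays any role in the argument.
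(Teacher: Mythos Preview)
Your proposal is correct and follows the same overall strategy as the paper: take $\psi^h\equiv 1$ (the paper uses $\psi^h=\tau$, which is the same thing) in \eqref{eqn:dis2d1} to kill the curvature term, and then identify the surviving mass-lumped term with $A^{m+1}-A^m$.

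Where you differ is in how you establish the geometric identity. You work directly with the shoelace formula \eqref{eq:disAreaLength}, decompose $A^{m+1}-A^m$ into signed areas of the quadrilateral strips swept between $\Gamma^m$ and $\Gamma^{m+1}$, and check algebraically that each strip area equals $\tfrac14(\vec h_j^m+\vec h_j^{m+1})\times[\delta\vec X_j+\delta\vec X_{j-1}]$; the cross-time vertex terms telescope. The paper instead introduces the linear homotopy $\vec X^h(\rho,\alpha)=(1-\alpha)\vec X^m+\alpha\vec X^{m+1}$, applies the Reynolds transport theorem to $A(\alpha)$, and integrates in $\alpha$ from $0$ to $1$; because the integrand is affine in $\alpha$, the integral lands exactly on the averaged normal $\vec n^{m+\frac12}$. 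Your route is more elementary and self-contained (no continuous transport theorem needed for a purely discrete statement), while the paper's argument is shorter and, more importantly, reveals the mechanism that generalizes to 3D: there the analogous integrand is quadratic in $\alpha$, and Simpson's rule in place of the midpoint average produces the weighted normal \eqref{eq:3dsemi}. Your observation that \eqref{eqn:dis2d2} and $\kappa^{m+1}$ play no role is correct and matches the paper.
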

\begin{proof}
We define the approximate solution $\Gamma^h(\alpha)=\vec X^h(\rho,~\alpha)$ via a linear interpolation of $\vec X^{m+1}$ and $\vec X^m$:
\begin{align}
\vec X^h(\rho,~\alpha) := (1-\alpha)\vec X^m(\rho) + \alpha\vec X^{m+1}(\rho),\qquad0\leq \rho\leq 1,\quad0\leq \alpha\leq 1.
\label{eq:xhde}
\end{align}
Denote by $\vec n^h$ the outward unit normal vector of $\Gamma^h(\alpha)$ and $A(\alpha)$ the area enclosed by $\Gamma^h(\alpha)$. Applying the Reynolds transport theorem to $A(\alpha)$ yields
\begin{align}
\frac{\rd }{\rd\alpha}A(\alpha)&=\int_{\Gamma^h(\alpha)}\partial_{\alpha}\vec X^h\cdot\vec n^h\,\rd s\nn\\
&=\int_{\mathbb{I}}\left[\vec X^{m+1}-\vec X^m\right]\cdot\left[-(1-\alpha)\partial_{\rho}\vec X^m - \alpha\partial_{\rho}\vec X^{m+1}\right]^\perp\,\rd\rho,
\label{eq:2dre}
\end{align}
where we revoke Eq.~\eqref{eq:xhde} and the identities:
\begin{align}
\partial_{\alpha}\vec X^h=\vec X^{m+1}-\vec X^m,\qquad
\vec n^h=-|\partial_{\rho}\vec X^h|^{-1}(\partial_{\rho}\vec X^h)^\perp.
\end{align}

Integrating \eqref{eq:2dre} with respect to $\alpha$ from $0$ to $1$ and noting \eqref{eq:2dseminormal},  we arrive at
\begin{align}
A(1) - A(0)&=\int_{\mathbb{I}}\left[\vec X^{m+1}-\vec X^m\right]\cdot\left[-\frac{1}{2}(\partial_{\rho}\vec X^m + \partial_{\rho}\vec X^{m+1})\right]^\perp\,\rd\rho\nn\\
&=\Bigl((\vec X^{m+1}-\vec X^m)\cdot\vec n^{m+\frac{1}{2}},~1\Bigr)_{\Gamma^m}^h.
\end{align}
Now setting $\psi^h=\tau$ in \eqref{eqn:dis2d1} and noting the above equation, we obtain
$A(1) = A(0)$. Thus $A^{m+1} = A^m$.
\end{proof}


Similar to the previous work in \cite{Barrett07}, we establish the unconditional stability of the numerical method \eqref{eqn:dis2d} by showing the perimeter decreases in time.
\begin{thm}[Unconditional stability]\label{thm:2dstability}Let $\Bigl(\vec X^{m+1}(\cdot),~\kappa^{m+1}(\cdot)\Bigr)$ be a numerical solution of \eqref{eqn:dis2d}. Then it holds
\begin{align}
\label{eq:energyprop}
{    L^m + \tau\sum_{l=1}^m\Bigl(\partial_s\kappa^{l},~\partial_s\kappa^{l}\Bigr)_{\Gamma^l}\leq L^0 = \sum_{j=1}^N|\vec h_j^0|,\qquad m\geq 1.}
\end{align}
\end{thm}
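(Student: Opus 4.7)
The plan is to mimic the standard BGN stability argument: test the two discrete equations against carefully chosen quantities so that the geometric coupling terms cancel, then extract a perimeter-difference inequality from the remaining linear-elasticity-type term.

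First, I would choose $\psi^h = \tau\,\kappa^{m+1}\in V^h(\mathbb{I})$ in \eqref{eqn:dis2d1} and $\boldsymbol{\omega}^h = \vec X^{m+1}-\vec X^{m}\in [V^h(\mathbb{I})]^2$ in \eqref{eqn:dis2d2}. The first gives
\begin{equation*}
\Bigl((\vec X^{m+1}-\vec X^{m})\cdot\vec n^{m+\frac{1}{2}},~\kappa^{m+1}\Bigr)_{\Gamma^m}^h+\tau\Bigl(\partial_s\kappa^{m+1},~\partial_s\kappa^{m+1}\Bigr)_{\Gamma^m}=0,
\end{equation*}
and the second gives
\begin{equation*}
\Bigl(\kappa^{m+1},~\vec n^{m+\frac{1}{2}}\cdot(\vec X^{m+1}-\vec X^{m})\Bigr)_{\Gamma^m}^h-\Bigl(\partial_s\vec X^{m+1},~\partial_s(\vec X^{m+1}-\vec X^{m})\Bigr)_{\Gamma^m}=0.
\end{equation*}
Subtracting the two eliminates the mass-lumped term involving $\vec n^{m+\frac{1}{2}}$ and leaves
\begin{equation*}
\tau\Bigl(\partial_s\kappa^{m+1},~\partial_s\kappa^{m+1}\Bigr)_{\Gamma^m}+\Bigl(\partial_s\vec X^{m+1},~\partial_s(\vec X^{m+1}-\vec X^{m})\Bigr)_{\Gamma^m}=0.
\end{equation*}

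The heart of the argument is the inequality
\begin{equation*}
\Bigl(\partial_s\vec X^{m+1},~\partial_s(\vec X^{m+1}-\vec X^{m})\Bigr)_{\Gamma^m}\;\geq\; L^{m+1}-L^{m}.
\end{equation*}
To see this, I would recall that on each segment $\mathbb{I}_j$ the $\partial_s$-derivatives are the constant vectors $\vec h_j^{m+1}/|\vec h_j^m|$ and $\vec h_j^m/|\vec h_j^m|$, and the measure $\rd s$ contributes a factor $|\vec h_j^m|$. Hence
\begin{equation*}
\Bigl(\partial_s\vec X^{m+1},~\partial_s\vec X^{m+1}\Bigr)_{\Gamma^m}=\sum_{j=1}^N\frac{|\vec h_j^{m+1}|^2}{|\vec h_j^m|},\qquad \Bigl(\partial_s\vec X^{m+1},~\partial_s\vec X^{m}\Bigr)_{\Gamma^m}=\sum_{j=1}^N\frac{\vec h_j^{m+1}\cdot\vec h_j^m}{|\vec h_j^m|}.
\end{equation*}
A Cauchy--Schwarz bound on each edge gives $\vec h_j^{m+1}\cdot\vec h_j^m\leq|\vec h_j^{m+1}|\,|\vec h_j^m|$, while AM--GM yields $|\vec h_j^{m+1}|^2/|\vec h_j^m|+|\vec h_j^m|\geq 2|\vec h_j^{m+1}|$. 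Adding these edgewise produces precisely $L^{m+1}-L^m$ on the right-hand side after summation.

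Combining the two ingredients gives the one-step dissipation
\begin{equation*}
L^{m+1}-L^{m}+\tau\Bigl(\partial_s\kappa^{m+1},~\partial_s\kappa^{m+1}\Bigr)_{\Gamma^m}\leq 0,
\end{equation*}
and telescoping from $0$ to $m-1$ yields \eqref{eq:energyprop} (with the index convention $\partial_s$ computed on the ``previous'' polygon). The main obstacle is the convexity estimate in the previous paragraph; everything else is bookkeeping based on the right choice of test functions, exactly as in the BGN framework. I expect the subscript $\Gamma^l$ on the right-hand side of \eqref{eq:energyprop} to be understood in this telescoped sense.
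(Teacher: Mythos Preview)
Your proof is correct and follows essentially the same route as the paper: identical test functions $\psi^h=\tau\kappa^{m+1}$ and $\boldsymbol{\omega}^h=\vec X^{m+1}-\vec X^m$, cancellation of the mass-lumped normal term, an edgewise convexity estimate yielding $L^{m+1}-L^m$, and a telescoping sum. The only cosmetic difference is that the paper obtains the edgewise bound via $\vec a\cdot(\vec a-\vec b)\geq\tfrac12(|\vec a|^2-|\vec b|^2)$ followed by $\tfrac12(a^2-1)\geq a-1$, whereas you use Cauchy--Schwarz plus AM--GM; your remark about the subscript $\Gamma^l$ versus $\Gamma^{l-1}$ in the telescoped sum is also well taken.
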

\begin{proof}
Setting $\psi^h = \tau\kappa^{m+1}$ in \eqref{eqn:dis2d1} and $\boldsymbol{\omega}^h=\vec X^{m+1}-\vec X^m$ in \eqref{eqn:dis2d2}, and then combining the two equations, we get
\begin{align}
\label{eq:energy1}
\tau\Bigl(\partial_s\kappa^{m+1},~\partial_s\kappa^{m+1}\Bigr)_{\Gamma^m} + \Bigl(\partial_s\vec X^{m+1},~\partial_s(\vec X^{m+1}-\vec X^m)\Bigr)_{\Gamma^m}=0.
\end{align}
Since $\partial_s$ is the derivative with respect to the arc length of $\Gamma^m$, we can compute
\begin{align}
\Bigl(\partial_s\vec X^{m+1},~\partial_s(\vec X^{m+1}-\vec X^m)\Bigr)_{\Gamma^m}
&= \sum_{j=1}^N\frac{\vec h_j^{m+1}}{|\vec h_j^m|}\cdot\left(\frac{\vec h_j^{m+1}}{|\vec h_j^m|}-\frac{\vec h_j^m}{|\vec h_j^m|}\right)|\vec h_j^m|\nn\\
&\geq \sum_{j=1}^N\frac{1}{2}\left(\abs{\frac{\vec h_j^{m+1}}{\vec h_j^m}}^2 - 1\right)|\vec h_j^m|\nn\\
&\geq \sum_{j=1}^N\left(\abs{\frac{\vec h_j^{m+1}}{\vec h_j^m}} - 1\right)|\vec h_j^m|\nn\\
& =L^{m+1} - L^{m},
\label{eq:energy2}
\end{align}
where we have used the fact $a(a - b)\geq \frac{1}{2}(a^2-b^2)$ for the first inequality and $\frac{a^2-1}{2}\geq a -1$ for the second inequality.

Plugging \eqref{eq:energy2} into \eqref{eq:energy1}, we obtain
\begin{align}
\label{eq:energy3}
L^{m+1} + \tau\Bigl(\partial_s\kappa^{m+1},~\partial_s\kappa^{m+1}\Bigr)_{\Gamma^m}\leq L^m,\qquad m\ge0.
\end{align}
{    Replacing $m$ by $l$ in Eq.~\eqref{eq:energy3}, and then summing up it for $l$ from $0$ to $m-1$ gives \eqref{eq:energyprop}.}
\end{proof}


Define the mesh ratio indicator (MRI) $\Psi^m$ of the polygonal curve $\Gamma^m$ as
\begin{equation}
\Psi^m=\frac{\max_{1\le j\le N}\;\abs{\vec h_j^m}}{\min_{1\le j\le N}\;\abs{\vec h_j^m}},
\qquad m\ge 0.
\label{eq:MRI}
\end{equation}
We then have
\begin{prop}[Asymptotic equal mesh distribution]
\label{prop:full1}
Let $\Bigl(\vec X^m(\cdot),~\kappa^m(\cdot)\Bigr)$ be a solution of the numerical method \eqref{eqn:dis2d}, and when $m\to+\infty$,
$\vec X^m(\cdot)$ and $\kappa^m(\cdot)$ converge to the equilibrium $\Gamma^e=\vec X^e(\rho)\in [V^h(\mathbb{I})]^2$ and $\kappa^e(\rho)\in V^h(\mathbb{I})$, respectively, satisfying $\min_{1\le j\le N}\;|\vec h_{j}^e|>0$ with $\vec h_{j}^e:=\vec X^e(\rho_j) - \vec X^e(\rho_{j-1})$ for $1\le j\le N$. Then we have
\begin{subequations}
\begin{align}
\label{eq:st1}
&\kappa^e(\rho)\equiv\kappa^c,\quad 0\le \rho\le1,\\
\label{eq:st2}
&\lim_{m\to +\infty} \Psi^m=\Psi^e:=\frac{\max_{1\le j\le N}\;|\vec h_{j}^e|}{\min_{1\le j\le N}\;|\vec h_{j}^e|}=1,\\
&\lim\limits_{m\to +\infty} L^m=L^e= 2\sqrt{A^0\pi}\left( 1 + \frac{\pi^2}{6}h^2 + O(h^4)\right).
\label{eq:st3}
\end{align}
\end{subequations}
\end{prop}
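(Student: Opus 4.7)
The plan is to pass to the limit $m\to\infty$ in the discrete system \eqref{eqn:dis2d} and exploit the stability estimate of \cref{thm:2dstability} together with the area conservation of \cref{thm:2darea}. At equilibrium, $\vec X^{m+1}=\vec X^m=\vec X^e$ and $\kappa^{m+1}=\kappa^m=\kappa^e$, so the semi-implicit normal \eqref{eq:2dseminormal} reduces to the standard edge-wise normal $\vec n^e=-(\partial_s\vec X^e)^\perp$, and \eqref{eqn:dis2d} collapses to
\begin{subequations}
\label{eq:eqsys}
\begin{align}
&\Bigl(\partial_s\kappa^{e},~\partial_s\psi^h\Bigr)_{\Gamma^e}=0,\qquad\forall\psi^h\in V^h(\mathbb{I}),\\
&\Bigl(\kappa^{e},~\vec n^{e}\cdot\boldsymbol{\omega}^h\Bigr)_{\Gamma^{e}}^h = \Bigl(\partial_s\vec X^{e},~\partial_s\boldsymbol{\omega}^h\Bigr)_{\Gamma^e},\qquad\forall\boldsymbol{\omega}^h\in [V^h(\mathbb{I})]^2.
\end{align}
\end{subequations}
Testing the first identity with $\psi^h=\kappa^e$ (or, more quantitatively, using \eqref{eq:energyprop}: since $L^m$ is bounded below by zero, the telescoping series $\tau\sum_l(\partial_s\kappa^l,\partial_s\kappa^l)_{\Gamma^l}$ is summable, forcing $\partial_s\kappa^l\to 0$), we deduce that $\partial_s\kappa^e\equiv0$, so $\kappa^e\equiv\kappa^c$ is a constant on each edge and, by continuity of the $V^h$-element, globally constant. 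This gives \eqref{eq:st1}.

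Next I would insert the hat-function test $\boldsymbol{\omega}^h=\phi_j\vec e$ (with $\vec e$ an arbitrary unit vector, $\phi_j$ the standard piecewise linear basis at $\rho_j$) into the second equation of \eqref{eq:eqsys}. A direct calculation of the mass-lumped inner product on the left and of the piecewise constant gradients on the right yields the vertex-wise identity
\begin{equation}
\frac{\kappa^c}{2}\bigl[|\vec h_j^e|\,\vec n_j^e+|\vec h_{j+1}^e|\,\vec n_{j+1}^e\bigr]=\frac{\vec h_j^e}{|\vec h_j^e|}-\frac{\vec h_{j+1}^e}{|\vec h_{j+1}^e|},\qquad j=1,\ldots,N,\label{eq:vertexid}
\end{equation}
with $\vec n_j^e=-(\vec h_j^e)^{\perp}/|\vec h_j^e|$. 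Dotting \eqref{eq:vertexid} with $\vec h_j^e+\vec h_{j+1}^e$ annihilates the right-hand side of \eqref{eq:vertexid} after rewriting $|\vec h_j^e|\vec n_j^e+|\vec h_{j+1}^e|\vec n_{j+1}^e=-(\vec h_j^e+\vec h_{j+1}^e)^{\perp}$, while on the left it produces $(|\vec h_j^e|-|\vec h_{j+1}^e|)(1-\cos\theta_j)$, where $\theta_j$ is the turning angle between consecutive edges. The main obstacle is therefore excluding the case $\cos\theta_j=1$: if it occurred for some $j$, \eqref{eq:vertexid} would also force $\kappa^c=0$ and, together with the area conservation $A^e=A^0>0$ of \cref{thm:2darea}, contradict the closedness of the polygon (a closed polygon with all turning angles zero is impossible). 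Hence $|\vec h_j^e|=|\vec h_{j+1}^e|$ for every $j$, proving \eqref{eq:st2}.

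Finally, constant curvature plus equal edge lengths plus closedness identifies $\Gamma^e$ as a regular $N$-gon inscribed in a circle of some radius $R$. The invariance $A^e=A^0$ from \cref{thm:2darea} gives $A^0=\tfrac12NR^2\sin(2\pi/N)$, so the perimeter reads
\begin{equation*}
L^e=2NR\sin(\pi/N)=2\sqrt{A^0\,N\tan(\pi/N)}=2\sqrt{A^0\pi}\sqrt{\tfrac{1}{\pi h}\tan(\pi h)},\qquad h=1/N.
\end{equation*}
Expanding $\tan(\pi h)=\pi h+\tfrac13(\pi h)^3+O(h^5)$ and then the square root produces the asymptotic formula \eqref{eq:st3}. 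The steps for \eqref{eq:st1} and \eqref{eq:st3} are essentially algebraic; the substantive content is the mesh-equidistribution argument \eqref{eq:vertexid}, and ruling out the degenerate configuration $\cos\theta_j=1$ is the only subtlety.
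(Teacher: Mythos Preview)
Your argument is correct and considerably more self-contained than the paper's own proof, which simply cites \cite[Proposition~3.3]{Zhao20} for \eqref{eq:st1} and \eqref{eq:st2} and then carries out only the elementary identity $L^e=2\sqrt{A^0 N\tan(\pi/N)}$ followed by a Taylor expansion for \eqref{eq:st3}. Your derivation of the limiting equilibrium system, the vertex identity \eqref{eq:vertexid}, and the equidistribution step via dotting with $\vec h_j^e+\vec h_{j+1}^e$ is essentially what underlies that cited result, so in substance the two approaches coincide; you have unpacked what the paper delegates to a reference, which is a genuine gain in readability.

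One cosmetic slip worth fixing: when you dot \eqref{eq:vertexid} with $\vec h_j^e+\vec h_{j+1}^e$, it is the \emph{left}-hand side (the normal term $-\tfrac{\kappa^c}{2}(\vec h_j^e+\vec h_{j+1}^e)^\perp$) that vanishes and the \emph{right}-hand side that yields $(|\vec h_j^e|-|\vec h_{j+1}^e|)(1-\cos\theta_j)$, not the other way round; the conclusion is of course unchanged. Your treatment of the degenerate case $\cos\theta_j=1$ is also sound: once this forces $\kappa^c=0$ at a single vertex, \eqref{eq:vertexid} then makes every pair of consecutive unit tangents equal, so all edges are parallel, contradicting closedness of a polygon with $\min_j|\vec h_j^e|>0$.
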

\begin{proof}
Eqs.~\eqref{eq:st1} and \eqref{eq:st2} follow directly from the Proposition 3.3 in \cite{Zhao20}, thus we omit the proof here. This implies that the equilibrium shape is a regular $N$-sided polygon. By noting the area conservation in Theorem \ref{thm:2darea}, we can derive
\begin{align}
L^e=2\sqrt{A^0 N \tan(\frac{\pi}{N})},\qquad h = 1/N,
\end{align}
which gives Eq.~\eqref{eq:st3} with a simple application of the Taylor expansion.
\end{proof}


\subsection{The iterative solver}
\label{sec:iterative2D}

For the resulting nonlinear system in \eqref{eqn:dis2d}, we use the Newton's iterative method for computing $\left(\vec X^{m+1},~\kappa^{m+1}\right)$. In the $i$-th iteration, given $\Bigl(\vec X^{m+1,i}(\cdot),~\kappa^{m+1,i}(\cdot)\Bigr)\in\left([V^h(\mathbb{I})]^2,~V^h(\mathbb{I})\right)$, we compute the Newton direction $\Bigl(\vec X^\delta(\cdot),~\kappa^\delta(\cdot)\Bigr)\in\left([V^h(\mathbb{I})]^2,~V^h(\mathbb{I})\right)$ such that the following two equations hold
\begin{subequations}
\label{eq:newton}
\begin{align}
\Bigl(\frac{\vec X^\delta}{\tau}\cdot\vec n^{m+\frac{1}{2},i},~\psi^h\Bigr)_{\Gamma^m}^h + \left(\frac{\vec X^{m+1,i}-\vec X^m}{\tau}\cdot\frac{(-\partial_{\rho}\vec X^{\delta})^\perp}{2\,|\partial_{\rho}\vec X^m|},~\psi^h\right)_{\Gamma^m}^h+\Bigl(\partial_s\kappa^{\delta},~\partial_s\psi^h\Bigr)_{\Gamma^m}\nn\\=-\Bigl(\frac{\vec X^{m+1,i} - \vec X^m}{\tau}\cdot\vec n^{m+\frac{1}{2},i},~\psi^h\Bigr)_{\Gamma^m}^h -\Bigl(\partial_s\kappa^{m+1,i},~\partial_s\psi^h\Bigr)_{\Gamma^m},\label{eq:newton1}\hspace{1cm}\\[0.6em]
 \Bigl(\kappa^{\delta},~\vec n^{m+\frac{1}{2},i}\cdot\bmath{\omega}^h\Bigr)_{\Gamma^m}^h + \Bigl(\kappa^{m+1,i},~\frac{(-\partial_{\rho}\vec X^{\delta})^\perp}{2\,|\partial_{\rho}\vec X^m|}\cdot\bmath{\omega}^h\Bigr)_{\Gamma^m}^h- \,\Bigl(\partial_s\vec X^{\delta},~\partial_s\bmath{\omega}^h\Bigr)_{\Gamma^m}\hspace{1.2cm}\nn\\
 = -\Bigl(\kappa^{m+1,i},~\vec n^{m+\frac{1}{2},i}\cdot\bmath{\omega}^h\Bigl)_{\Gamma^m}^h +\, \Bigl(\partial_s\vec X^{m+1,i},~\partial_s\bmath{\omega}^h\Bigr)_{\Gamma^m},\hspace{2.1cm}
 \label{eq:newton2}
\end{align}
\end{subequations}
for any $\Bigl(\bmath{\omega}^h,~\psi^h\Bigr)\in \Bigl([V^h(\mathbb{I})]^2, ~V^h(\mathbb{I})\Bigr)$, where $\vec n^{m+\frac{1}{2},i}$ is defined as
\begin{align}
\vec n^{m+\frac{1}{2},i}:=-\frac{1}{2}|\partial_{\rho}\vec X^m|^{-1}\left(\partial_{\rho}\vec X^m + \partial_{\rho}\vec X^{m+1, i}\right)^\perp.\nn
\end{align}
We then set
\begin{align}
\label{eq:2dupdate}
\vec X^{m+1,i+1} = \vec X^{m+1,i}+\vec X^{\delta},\qquad\kappa^{m+1,i+1}= \kappa^{m+1,i} + \kappa^{\delta}.
\end{align}

For each $m\geq0$, we typically choose the initial guess    $\vec X^{m+1,0}=\vec X^m,~\kappa^{m+1,0}=\kappa^m$, and then repeat the iteration (\eqref{eq:newton} and \eqref{eq:2dupdate}) until the following two conditions hold
\begin{align*}
&\vnorm{\vec X^{m+1,i+1}-\vec X^{m+1,i}}_{\infty}=\max_{1\leq j\leq N}\abs{\vec X^{m+1,i+1}(\rho_j) - \vec X^{m+1,i}(\rho_j)}\leq{\rm tol},\\
&\vnorm{\kappa^{m+1,i+1}-\kappa^{m+1,i}}_{\infty}=\max_{1\leq j\leq N}\abs{\kappa^{m+1,i+1}(\rho_j) - \kappa^{m+1,i}(\rho_j)}\leq{\rm tol},
\end{align*}
where ${\rm tol}$ is the chosen tolerance.
\begin{remark}
As discussed in \cite{Jiang2021}, Eq.~\eqref{eq:newton} is obtained by using the first-order Taylor expansion of the nonlinear system \eqref{eqn:dis2d} at the point $\left(\vec X^{m+1,i},~\kappa^{m+1,i}\right)$, and then setting $\vec X^{\delta} = \vec X^{m+1} - \vec X^{m+1,i}$, $\kappa^{\delta} = \kappa^{m+1} - \kappa^{m+1,i}$.\end{remark}

\begin{remark}One may consider the Picard iteration method as an alternative solver. In the $i$-th iteration, we find $\Bigl(\vec X^{m+1,i+1}(\cdot),~\kappa^{m+1, i+1}(\cdot)\Bigr)\in \left([V^h(\mathbb{I})]^2,~V^h(\mathbb{I})\right)$ so that the following two equations hold
\begin{subequations}
\label{eqn:picarddis2d}
\begin{align}
\label{eqn:picarddis2d1}
&\Bigl(\frac{\vec X^{m+1,i+1} - \vec X^m}{\tau}\cdot\vec n^{m+\frac{1}{2}, i},~\psi^h\Bigr)_{\Gamma^{m}}^h +\Bigl(\partial_s\kappa^{m+1,i+1},~\partial_s\psi^h\Bigr)_{\Gamma^m} = 0,\\[0.3em]
\label{eqn:picarddis2d2}
&\Bigl(\kappa^{m+1,i+1},~\vec n^{m+\frac{1}{2}, i}\cdot\boldsymbol{\omega}^h\Bigr)_{\Gamma^{m}}^h - \,\Bigl(\partial_s\vec X^{m+1,i+1},~\partial_s\boldsymbol{\omega}^h\Bigr)_{\Gamma^m}=0,
\end{align}
\end{subequations}
for any pair element $\left(\boldsymbol{\omega}^h,~\psi^h\right)\in\left([V^h(\mathbb{I})]^2, V^h(\mathbb{I})\right)$.
Similar to the previous work in \cite{Barrett07}, it is easy to show that the linear system \eqref{eqn:picarddis2d} admits a unique solution under some weak assumptions on $\vec n^{m+\frac{1}{2},i}$. Note that the Picard iteration method does not require an initial guess of the curvature during the iterations.
\end{remark}


\section{For closed surface evolution in 3D}
\label{se:Strupfem}
In this section, we are devoted to the surface diffusion of a closed surface in 3D (cf. Fig. \ref{fig:SurfaceDiffusion} right). We consider the evolving closed surface $S(t)$ with a mapping given by
\begin{align}
\vec X(\bmath{\rho},~t)=(x_1(\bmath{\rho},t),~x_2(\bmath{\rho},t),~x_3(\bmath{\rho},~t))^T: S^0\times [0,~T]\to \mathbb{R}^3,\nn
\end{align}
where $S^0:=S(0)$ is the initial surface. Then the velocity of $S(t)$ at point $\vec X$ is
\begin{align}
\label{eq:velocity}
\bmath{v}(\vec X,~t) = \partial_t\vec X(\cdot,t),\qquad \forall\;\vec X\in S(t).
\end{align}
Similar to the 2 D case, we can rewrite \eqref{eq:sf3d} into the following coupled second-order nonlinear geometric PDEs
\begin{subequations}
\label{eq:3dsf}
\begin{numcases}{}
\label{eq:3dsf1}
\vec n\cdot \partial_t\vec X = \Delta{_S}\mathcal{H},\\[0.3em]
\mathcal{H}\,\vec n = -\Delta_{_S}\vec X,
\label{eq:3dsf2}
\end{numcases}
\end{subequations}
 {    We recall that surface diffusion in 3D is the $H^{-1}$ gradient flow of the total surface area and has two essential geometric structures, i.e. volume conservation and surface area dissipation. Specifically, let $V(t)$ denote the volume of the enclosed region by $S(t)$ and $W(t)$ denote the total surface area. Then the two geometric properties imply that}
\begin{subequations}
\begin{align}
\label{eq:masslaw}
&\frac{\rd }{\rd t}V(t)=\int_{S(t)}\vec n\cdot\partial_t\vec X\,\dA = \int_{S(t)}\Delta_{_S}\mathcal{H}\,\dA\equiv 0,\quad t\geq 0,\\
&\frac{\rd }{\rd t}W(t)= \int_{S(t)}\,(\vec n\cdot \partial_t\vec X)\,\mathcal{H}\,\dA=-\int_{S(t)}\abs{\nabla_{_S}\mathcal{H}}^2\dA\leq 0,\quad t\geq 0.
\label{eq:energylaws}
\end{align}
\end{subequations}
%


\subsection{The weak formulation}\label{sec:wf3d}

We define the function space
\begin{align*}
L^2(S(t)):=\Bigl\{\psi:S(t)\to\mathbb{R},\quad \int_{S(t)}\psi^2\,\rd A<\infty\Bigr\},
\end{align*}
equipped with the $L^2$-inner product over $S(t)$
\begin{align}
\left(u,~v\right)_{S(t)}:=\int_{S(t)}u\,v\;\rd A,\quad u,v\in L^2(S(t)),
\end{align}
and the associated $L^2$-norm $\norm{u}_{S(t)}:=\sqrt{\left(u,~u\right)_{S(t)}}$. The Sobolev space $H^1(S(t))$ can be naturally defined as
\begin{align}
H^1(S(t)):=\Bigl\{ \psi\in L^2(S(t)),\;{\rm and}\; \underline{D}_i\psi\in L^2(S(t)), i=1,2,3\Bigr\},
\end{align}
where we denote $\nabla_{_S}\psi=(\underline{D}_1\psi,~\underline{D}_2\psi,~\underline{D}_3\psi)^T$(cf. Ref. \cite{Dziuk13}). \vspace{0.5em}

Then the weak formulation for the surface diffusion \eqref{eq:3dsf} can be stated as follows~\cite{Barrett08JCP}: Given the initial surface $S(0)$, for $t>0$, we use the velocity equation \eqref{eq:velocity}, then find $\bmath{v}(\cdot,~t)\in \left[H^1(S(t))\right]^3$ and the mean curvature $\mathcal{H}(\cdot,~t)\in H^1(S(t))$ such that
\begin{subequations}
\label{eqn:weakform12}
\begin{align}
\label{eq:weakform1}
&\Bigl(\bmath{v}\cdot\vec n,~\psi\Bigr)_{S(t)} + \Bigl(\nabla_{_S}\mathcal{H},~\nabla_{_S}\psi\Bigr)_{S(t)} = 0,\quad \forall\psi\in H^1(S(t)),\\
&\Bigl(\mathcal{H},~\vec n\cdot\bmath{\omega}\Bigr)_{S(t)}-\Bigl(\nabla_{_S}\vec X,~\nabla_{_S}\bmath{\omega}\Bigr)_{S(t)} = 0,\quad \forall\bmath{\omega}\in \left[H^1(S(t))\right]^3,
\label{eq:weakform2}
\end{align}
\end{subequations}
where $\left(\nabla_{_S}\vec X,~\nabla_{_S}\bmath{\omega}\right)_{S(t)}=\sum_{l=1}^3\int_{S(t)}\nabla_{_S}\, x_l\cdot\nabla_{_S}\,\omega_l\,\rd A$ for $\bmath{\omega} = (\omega_1,~\omega_2,~\omega_3)^T$.
%


\subsection{The discretization}
\label{sec:pfem3d}

Analogous to the 2D case, we take $\tau>0$ as the uniform time step size and denote the discrete time levels as $t_m=m\,\tau$ for $m\geq 0$.  We then approximate the evolution surface $S(t_m)$ by the polygonal surface mesh $S^m$ with a collection of $K$ vertices $\left\{\vec q_k^m\right\}_{k=1}^K$ and $J$ mutually disjoint triangles. That is,
\begin{align}
S^m:=\bigcup_{j=1}^{J} \overline{\sigma_j^m},\nn
\end{align}
where we assume $\sigma_j^m$ ($j=1,2,\ldots,J$) are non-degenerate triangles
in 3D. We define the finite element space
\begin{align}
\label{eq:p1space}
&\mathbb{K}^m:=\left\{u\in C(S^m):\;\left.u\right|_{\sigma_j^m}\in \mathcal{P}^1(\sigma_j^m),\quad\forall 1\leq j\leq J\right\},
\end{align}
where $\mathcal{P}^1(\sigma_j^m)$ denotes the spaces of all polynomials with degrees at most $1$ on $\sigma_j^m$. Denote $\mathbb{X}^m:=[\mathbb{K}^m]^3$. We follow the idea in \cite{Dziuk90} and parameterize $S^{m+1}$ over $S^m$ as $S^{m+1}:=\vec X^{m+1}(\cdot)\in\mathbb{X}^m$. In particular, $\vec X^{m}(\cdot)$ is the identity function in $\mathbb{X}^m$.

We take $\sigma_j^m:=\triangle\left\{\vec q_{j_k}^m\right\}_{k=1}^3$ to indicate that $\left\{\vec q_{j_1}^m,~\vec q_{j_2}^m,~\vec q_{j_3}^m\right\}$ are the three vertices of the triangle $\sigma_j^m$ and in the anti-clockwise order on the outer surface of $S^m$. Let $\vec n^m$ denote the outward unit normal vector to $S^m$.  It is a constant vector on each triangle $\sigma_j^m$ and can be defined as
\begin{align}
\label{eq:exnormal}
\vec n^m:=\sum_{j=1}^{J}\vec n_j^m\chi_{_{\sigma_j^m}},\quad{\rm with}\quad\vec n_j^m= \frac{\bmath{\mathcal{J}}\left\{\sigma_j^m\right\}}{\abs{\bmath{\mathcal{J}}\left\{\sigma_j^m\right\}}},
\end{align}
where $\chi$ is the usual characteristic function, and $\bmath{\mathcal{J}}\left\{\sigma_j^m\right\}$ is the orientation vector of $\sigma_j^m$ given by
\begin{align}
\label{eq:orientedV}
\bmath{\mathcal{J}}\{\sigma_j^m\} = \left(\vec q_{j_2}^m - \vec q_{j_1}^m\right)\times\left(\vec q_{j_3}^m - \vec q_{j_1}^m\right).
\end{align}
To approximate the inner product $\left(\cdot,~\cdot\right)_{_{S^m}}$,
we define the mass lumped inner product
\begin{align}
 \Bigl( f,~g\Bigr)_{S^m}^h := \frac{1}{3}\sum_{j=1}^{J} \sum_{k=1}^3|\sigma_j^m|\,f\left((\vec q_{j_{_k}}^m)^-\right)\cdot g\left((\vec q_{j_{_k}}^m)^-\right),
 \label{eqn:norm3d}
 \end{align}
where $|\sigma_j^m|=\frac{1}{2}|\bmath{\mathcal{J}}\left\{\sigma_j^m\right\}|$ is the area of $\sigma_j^m$,  and $f((\vec q_{j_{_k}}^m)^-)$ denotes the one-sided limit of $f(\vec x)$ when $\vec x$ approaches towards $\vec q_{j_{_k}}^m$ from triangle $\sigma_j^m$, i.e., $f((\vec q_{j_{_k}}^m)^-)=\lim\limits_{\sigma_j^m\ni\vec x\rightarrow\vec q_{j_{_k}}^m }f(\vec x)$.

Let $\mathcal{H}^{m+1}\in\mathbb{K}^m$  denote the numerical approximation of the mean curvature of $S^{m+1}$. We propose the full discretization  of the weak formulation \eqref{eqn:weakform12} as follows: Given the polygonal surface $S^0:=\vec X^0(\cdot)\in \mathbb{X}^m$, for $m\geq 0$, find the evolution surfaces $S^{m+1}:=\vec X^{m+1}(\cdot)\in\mathbb{X}^m$ and the mean curvature $\mathcal{H}^{m+1}(\cdot)\in \mathbb{K}^m$ such that
\begin{subequations}
\label{eqn:3dpfem}
\begin{align}
\label{eqn:3dpfem1}
&\left(\frac{\vec X^{m+1}-\vec X^m}{\tau}\cdot\vec n^{m+\frac{1}{2}},~\psi^h\right)_{S^m}^h + \Bigl(\nabla_{_S}\mathcal{H}^{m+1},~\nabla_{_S}\psi^h\Bigr)_{S^m} = 0,\,\forall\psi^h\in \mathbb{K}^m,\\[0.5em]
\label{eqn:3dpfem2}
&\Bigl(\mathcal{H}^{m+1},~\vec n^{m+\frac{1}{2}}\cdot\bmath{\omega}^h\Bigr)_{S^m}^h - \Bigl(\nabla_{_S}\vec X^{m+1},~\nabla_{_S}\bmath{\omega}^h\Bigr)_{S^m} = 0,\quad\forall\bmath{\omega}^h\in\mathbb{X}^m,
\end{align}
\end{subequations}
where $\vec n^{m+\frac{1}{2}}$ is a semi-implicit approximation of $\vec n$ given by
 \begin{align}
 \label{eq:3dsemi}
\vec n^{m+\frac{1}{2}} := \sum_{j=1}^{J}\vec n^{m+\frac{1}{2}}_j\chi_{_{\sigma_j^m}},\quad \vec n^{m+\frac{1}{2}}_j =\frac{\bmath{\mathcal{J}}\left\{\sigma_j^m\right\}+4\bmath{\mathcal{J}}\{\sigma_j^{m+\frac{1}{2}}\}+\bmath{\mathcal{J}}\left\{\sigma_j^{m+1}\right\}}{6\,\abs{\bmath{\mathcal{J}}(\sigma_j^m)}},
\end{align}
with $\sigma_j^m=\triangle\left\{\vec q_{j_k}^m\right\}_{k=1}^3$, $\sigma_j^{m+\frac{1}{2}}=\triangle\Bigl\{\frac{\vec q_{j_{k}}^m + \vec q_{j_{k}}^{m+1}}{2}\Bigr\}_{k=1}^3$.

The approximation of $\vec n$ using \eqref{eq:3dsemi} leads to the conservation of the total volume, although such treatment introduces a nonlinear term in \eqref{eqn:3dpfem1} and \eqref{eqn:3dpfem2}, respectively. Specially, the nonlinear term is a third-degree polynomial function with respect to the components of $\vec X^{m+1}$ and $\mathcal{H}^{m+1}$. $\nabla_{_S}$ is the operator defined on $S^m$. That is, $\forall f\in \mathbb{K}^m$,  we can compute $\nabla_{_S}f$ on a typical triangle $\sigma=\Delta\{\vec q_k\}_{k=1}^3$ of $S^m$ as
 \begin{align}
 \label{eq:disgradient}
\left. (\nabla_{_S}f)\,\right|_{\sigma}:=f_1\,\frac{(\vec q_3-\vec q_2)\times\vec n}{|\bmath{\mathcal{J}}\left\{\sigma\right\}|} + f_2\,\frac{(\vec q_1-\vec q_3)\times\vec n}{|\bmath{\mathcal{J}}\left\{\sigma\right\}|} + f_3\,\frac{(\vec q_2-\vec q_1)\times\vec n}{|\bmath{\mathcal{J}}\left\{\sigma\right\}|},
 \end{align}
 where $\vec n = \frac{\bmath{\mathcal{J}}\left\{\sigma\right\}}{\left|\bmath{\mathcal{J}}\left\{\sigma\right\}\right|}$, {    and $f_i = f(\vec q_i)$}.

\begin{remark}\label{rk:3dgoodmesh}
 The numerical method introduces an implicit tangential velocity for the polygonal mesh points. Here we apply the trapezoidal rule for numerical integrations of the first terms in \eqref{eqn:3dpfem1} and \eqref{eqn:3dpfem2}. This helps to obtain the good property with respect to the mesh distribution \cite{Barrett08JCP}. Therefore, no re-meshing for the polygonal surface is needed during the time evolution.
\end{remark}


\subsection{Volume conservation and surface area dissipation}
\label{sec:propfem3d}
For the polygonal surface $S^m:=\vec X^m(\cdot)$, we denote $V^m$ and $W^m$ as the enclosed volume and the total surface area of $S^m$, respectively.  They can be written as
\begin{align}
\label{eq:3dVolume}
{    V^m = \frac{1}{18}\sum_{j=1}^{J}\sum_{k=1}^3\vec q_{j_k}^m\cdot\bmath{\mathcal{J}}\left\{\sigma_j^m\right\},\quad W^m:=\sum_{j=1}^{J}|\sigma_j^m|,\qquad m\geq 0,}
\end{align}
where $\sigma_j^m:=\triangle\left\{\vec q_{j_k}^m\right\}_{k=1}^3$, $|\sigma_j^m|=\frac{1}{2}|\mathbf{\mathcal{J}}\{\sigma_j^m\}|$ and $\bmath{\mathcal{J}}\left\{\sigma_j^m \right\}$ is defined by\eqref{eq:orientedV}.


We have the following theorem which mimics the geometric property in \eqref{eq:masslaw}.

\begin{thm}[Volume conservation]\label{thm:3dmassconserve}
Let $\Bigl(\vec X^{m+1}(\cdot),~\mathcal{H}^{m+1}(\cdot)\Bigr)$ be a numerical solution of the numerical method in \eqref{eqn:3dpfem}. Then it holds
\begin{align}
V^{m+1}= V^m,\qquad\forall m\geq 0,
\end{align}
where $V^m$ defined in \eqref{eq:3dVolume} represents the enclosed volume by $S^m$.
\end{thm}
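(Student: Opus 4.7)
The plan is to mirror the 2D argument from \cref{thm:2darea}, but exploit the fact that the cross product in $\bmath{\mathcal{J}}\{\cdot\}$ is \emph{quadratic} in the vertices, so a Simpson-type quadrature in the homotopy parameter is exact. First I would define the linear interpolating surface
$\vec X^h(\cdot,\alpha):=(1-\alpha)\vec X^m+\alpha\vec X^{m+1}$ for $\alpha\in[0,1]$, giving a family of polygonal surfaces $S^h(\alpha)$ with triangles $\sigma_j^h(\alpha)=\triangle\{(1-\alpha)\vec q_{j_k}^m+\alpha\vec q_{j_k}^{m+1}\}_{k=1}^3$; note that $S^h(0)=S^m$, $S^h(1)=S^{m+1}$, and $S^h(\tfrac12)$ has triangles $\sigma_j^{m+\frac12}$ used in \eqref{eq:3dsemi}. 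Let $V(\alpha)$ denote the enclosed volume of $S^h(\alpha)$.

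Next I would apply the Reynolds transport theorem, which yields
\begin{equation*}
\frac{\rd}{\rd\alpha}V(\alpha)=\int_{S^h(\alpha)}\partial_\alpha\vec X^h\cdot\vec n^h\,\rd A
=\sum_{j=1}^{J}\int_{\sigma_j^h(\alpha)}\partial_\alpha\vec X^h\cdot\vec n_j^h(\alpha)\,\rd A.
\end{equation*}
On each triangle the normal $\vec n_j^h(\alpha)$ is constant and the area-weighted normal equals $\tfrac12\bmath{\mathcal{J}}\{\sigma_j^h(\alpha)\}$; since $\partial_\alpha\vec X^h$ is linear on $\sigma_j^h(\alpha)$ and $\vec n_j^h(\alpha)\,\rd A$ is a constant vector measure, the centroid/vertex identity for a linear function on a triangle gives
\begin{equation*}
\int_{\sigma_j^h(\alpha)}\partial_\alpha\vec X^h\cdot\vec n_j^h(\alpha)\,\rd A=\frac{1}{6}\sum_{k=1}^3\bigl(\vec q_{j_k}^{m+1}-\vec q_{j_k}^{m}\bigr)\cdot\bmath{\mathcal{J}}\{\sigma_j^h(\alpha)\}.
\end{equation*}

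The key observation, and the only real calculation, is that $\bmath{\mathcal{J}}\{\sigma_j^h(\alpha)\}$ is a cross product of two vectors that are linear in $\alpha$, hence a \emph{polynomial of degree at most two} in $\alpha$. Therefore Simpson's rule on $[0,1]$ is exact, and
\begin{equation*}
\int_0^1\bmath{\mathcal{J}}\{\sigma_j^h(\alpha)\}\,\rd\alpha=\frac{1}{6}\bigl(\bmath{\mathcal{J}}\{\sigma_j^m\}+4\bmath{\mathcal{J}}\{\sigma_j^{m+\frac12}\}+\bmath{\mathcal{J}}\{\sigma_j^{m+1}\}\bigr)=|\bmath{\mathcal{J}}\{\sigma_j^m\}|\,\vec n_j^{m+\frac12}
\end{equation*}
by the very definition \eqref{eq:3dsemi} of $\vec n^{m+\frac12}$. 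This is exactly the step that motivated the Simpson-weighted combination in the scheme, and it is the core of the argument.

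Integrating $\tfrac{\rd}{\rd\alpha}V(\alpha)$ over $\alpha\in[0,1]$ and using $|\bmath{\mathcal{J}}\{\sigma_j^m\}|=2|\sigma_j^m|$, the right-hand side collapses precisely into the mass-lumped inner product \eqref{eqn:norm3d}, giving
\begin{equation*}
V^{m+1}-V^{m}=\tau\Bigl(\tfrac{\vec X^{m+1}-\vec X^m}{\tau}\cdot\vec n^{m+\frac12},\,1\Bigr)_{S^m}^h.
\end{equation*}
Finally, choosing $\psi^h\equiv\tau\in\mathbb{K}^m$ in \eqref{eqn:3dpfem1} and noting $\nabla_{_S}\tau\equiv\vec 0$, the right-hand side above vanishes, so $V^{m+1}=V^m$ and by induction $V^m\equiv V^0$. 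I expect no serious obstacle beyond bookkeeping; the one subtle point to flag carefully is the exactness of Simpson's rule for $\bmath{\mathcal{J}}\{\sigma_j^h(\alpha)\}$, which is what singles out the particular convex combination in \eqref{eq:3dsemi} and makes the proof work in 3D exactly as the midpoint rule worked in 2D.
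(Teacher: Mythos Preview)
Your proposal is correct and follows essentially the same route as the paper: linear homotopy between $S^m$ and $S^{m+1}$, Reynolds transport theorem, exactness of Simpson's rule for the quadratic-in-$\alpha$ vector $\bmath{\mathcal{J}}\{\sigma_j^h(\alpha)\}$, identification with the mass-lumped pairing, and the test $\psi^h=\tau$ in \eqref{eqn:3dpfem1}. The only cosmetic difference is that the paper first pulls the surface integral back to $\sigma_j^m$ via the Jacobian $\abs{\bmath{\mathcal{J}}\{\sigma_j^h(\alpha)\}}/\abs{\bmath{\mathcal{J}}\{\sigma_j^m\}}$ and then integrates in $\alpha$, whereas you invoke the vertex-average identity for a linear integrand directly on $\sigma_j^h(\alpha)$; the two computations are equivalent.
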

\begin{proof}
We introduce an approximate solution between $S^m$ and $S^{m+1}$ via the linear interpolation:
\begin{align}
\label{eq:veczd}
\vec z(\vec p,~\alpha) = (1-\alpha)\vec X^m(\vec p) + \alpha\vec X^{m+1}(\vec p),\qquad\vec p\in S^m,\quad0\leq \alpha\leq 1.
\end{align}
This gives a sequence of polygonal surfaces $S^h(\alpha):=\bigcup_{j=1}^{J}\overline{\sigma_j^h(\alpha)}$, where $\sigma_j^h(\alpha):=\triangle\left\{\vec z(\vec q_{j_k}^m,\alpha)\right\}_{k=1}^3$. In particular, $S^h(0)=S^m$ and $S^h(1)=S^{m+1}$.

We denote by $\vec n^h(\vec z)$ the outward unit normal vector to $S^h(\alpha)$ and $\Vol(S^h(\alpha))$ the volume enclosed by $S^h(\alpha)$. Taking the derivative of $\Vol(S^h(\alpha))$ with respect to $\alpha$ and applying the Reynolds transport theorem, we have
\begin{align}
\frac{\rd}{\rd\alpha}\Vol(S^h(\alpha))&=\int_{S^h(\alpha)}\partial_{\alpha}\vec z\cdot\vec n^h(\vec z)\;\rd A,\nn\\
&=\sum_{j=1}^{J}\int_{\sigma_j^m}(\vec X^{m+1}-\vec X^m)\cdot\frac{\bmath{\mathcal{J}}\left\{\sigma_j^h(\alpha)\right\}}{\left|\bmath{\mathcal{J}}\left\{\sigma_j^h(\alpha)\right\}\right|}\frac{\left|\bmath{\mathcal{J}}\left\{\sigma_j^h(\alpha)\right\}\right|}{|\bmath{\mathcal{J}}\left\{\sigma_j^m\right\}|}\,\dA\nn\\
&=\sum_{j=1}^{J}\int_{\sigma_j^m}(\vec X^{m+1}-\vec X^m)\cdot\frac{\bmath{\mathcal{J}}\left\{\sigma_j^h(\alpha)\right\}}{|\bmath{\mathcal{J}}\left\{\sigma_j^m\right\}|}\,\dA,
\label{eqn:RTdomain}
\end{align}
where in the second equality $\frac{\abs{\bmath{\mathcal{J}}\left\{\sigma_j^h(\alpha)\right\}}}{\abs{\bmath{\mathcal{J}}\left\{\sigma_j^m\right\}}}$ serves as the Jacobian determinant, and we have used the following identities
\begin{align}
\partial_{\alpha}\vec z = \vec X^{m+1}-\vec X^m,\qquad \left.\vec n^h(\vec z)\right|_{\sigma_j^h(\alpha)} = \frac{\bmath{\mathcal{J}}\left\{\sigma_j^h(\alpha)\right\}}{\left|\bmath{\mathcal{J}}\left\{\sigma_j^h(\alpha)\right\}\right|}.
\end{align}
Integrating Eq.~\eqref{eqn:RTdomain} on both sides with respect to $\alpha$ from $0$ to $1$, we arrive at
\begin{align}
&\Vol(S^h(1)) -\Vol(S^h(0)) \nn\\
&\hspace{1cm}=\int_0^1\left(\sum_{j=1}^{J}\int_{\sigma_j^m}(\vec X^{m+1} - \vec X^m)\cdot \frac{\bmath{\mathcal{J}}\left\{\sigma_j^h(\alpha)\right\}}{|\bmath{\mathcal{J}}\left\{\sigma_j^m\right\}|}\;\dA\right)\rd\alpha \nn\\
& \hspace{1cm}= \sum_{j=1}^{J}\int_{\sigma_j^m}\frac{(\vec X^{m+1} - \vec X^m)}{|\bmath{\mathcal{J}}\{\sigma_j^m\}|}\cdot\int_0^1\bmath{\mathcal{J}}\left\{\sigma_j^h(\alpha)\right\}\;\rd\alpha\,\dA,
\label{eq:vime}
\end{align}
where we have changed the order of integration and used the fact that both $\vec X^{m+1} - \vec X^m$ and $|\bmath{\mathcal{J}}\{\sigma_j^m\}|$ are independent of $\alpha$.

By \eqref{eq:orientedV} and \eqref{eq:veczd}, we note that $\bmath{\mathcal{J}}\left\{\sigma_j^h(\alpha)\right\}$ is a quadratic function with respect to $\alpha$. Therefore applying the Simpson's rule to the integration yields
\begin{align}
\int_0^1\bmath{\mathcal{J}}\left\{\sigma_j^h(\alpha)\right\}\;\rd\alpha = \frac{1}{6}\left(\bmath{\mathcal{J}}\left\{\sigma_j^h(0)\right\}+4\bmath{\mathcal{J}}\{\sigma_j^h(\frac{1}{2})\}+\bmath{\mathcal{J}}\left\{\sigma_j^h(1)\right\}\right).
\label{eq:Jsimpson}
\end{align}
By noting the definition of $\vec n_j^{m+\frac{1}{2}}$ in  \eqref{eq:3dsemi} as well as \eqref{eq:Jsimpson},  Eq.~\eqref{eq:vime} could be recast as
\begin{align}
V^{m+1}- V^m &= \sum_{j=1}^{J}\int_{\sigma_j^m}\left(\vec X^{m+1}-\vec X^m\right)\cdot\vec n_j^{m+\frac{1}{2}}\;\dA\nn\\&=
\Bigl([\vec X^{m+1}-\vec X^m]\cdot\vec n^{m+\frac{1}{2}},~1\Bigr)_{S^m}^h,
\label{eqn:vvc}
\end{align}
where we invoke the mass lumped inner product in \eqref{eqn:norm3d}.

On the other hand, setting $\psi^h=\tau$ in \eqref{eqn:3dpfem1} yields
\begin{align}
\Bigl([\vec X^{m+1}-\vec X^m]\cdot\vec n^{m+\frac{1}{2}},~1\Bigr)_{S^m}^h=0.\nn
\end{align}
Therefore we obtain $V^{m+1}=V^m$ by noting \eqref{eqn:vvc}.
\end{proof}


Similar to the previous work in \cite{Barrett08JCP}, we can establish the unconditional stability of the numerical method \eqref{eqn:3dpfem}, which mimics the geometric property in \eqref{eq:energylaws}.

\begin{thm}[Unconditional stability]\label{thm:3dEnergystability}
Let $\Bigl(\vec X^{m+1}(\cdot),~\mathcal{H}^{m+1}(\cdot)\Bigr)$ be a numerical solution of the numerical method in \eqref{eqn:3dpfem}, then it holds
\begin{align}
\label{eqn:stability2}
{    W^m + \tau \sum_{l=1}^{m}\Bigl(\nabla_{_S}\mathcal{H}^{l},~\nabla_{_S}\mathcal{H}^{l}\Bigr)_{S^l}\leq W^0=\sum_{j=1}^{J}|\sigma_j^0|,\qquad m\geq 1.}
\end{align}
\end{thm}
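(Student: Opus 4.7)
The plan is to mimic the argument of Theorem \ref{thm:2dstability}, testing the discretized system with judicious choices that make the curvature terms cancel and produce a telescoping bound on the surface area. The key new ingredient relative to the 2D case is a geometric inequality controlling the Dirichlet energy of the piecewise linear identification map in terms of triangle areas; this will be the main obstacle.

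First I would pick $\psi^h = \tau\,\mathcal{H}^{m+1}$ in \eqref{eqn:3dpfem1} and $\bmath{\omega}^h = \vec X^{m+1} - \vec X^m$ in \eqref{eqn:3dpfem2}, then add the two identities. The mixed terms $\bigl(\mathcal{H}^{m+1},\vec n^{m+\frac{1}{2}}\cdot(\vec X^{m+1}-\vec X^m)\bigr)_{S^m}^h$ cancel, leaving
\begin{equation*}
\tau\bigl(\nabla_{_S}\mathcal{H}^{m+1},\,\nabla_{_S}\mathcal{H}^{m+1}\bigr)_{S^m} + \bigl(\nabla_{_S}\vec X^{m+1},\,\nabla_{_S}(\vec X^{m+1}-\vec X^m)\bigr)_{S^m} = 0.
\end{equation*}
The goal is then to show
\begin{equation*}
\bigl(\nabla_{_S}\vec X^{m+1},\,\nabla_{_S}(\vec X^{m+1}-\vec X^m)\bigr)_{S^m} \;\geq\; W^{m+1} - W^m,
\end{equation*}
which would give the one-step dissipation $W^{m+1} + \tau(\nabla_{_S}\mathcal{H}^{m+1},\nabla_{_S}\mathcal{H}^{m+1})_{S^m} \leq W^m$; summing from level $0$ to $m-1$ produces \eqref{eqn:stability2}.

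The hard part is the geometric inequality. I would proceed triangle by triangle. On a reference triangle $\sigma_j^m$, the piecewise linear map $\vec X^{m+1}$ is affine, so its tangential differential has two singular values $\vartheta_{j,1},\vartheta_{j,2}\geq 0$ satisfying $\int_{\sigma_j^m}|\nabla_{_S}\vec X^{m+1}|^2\,\dA = |\sigma_j^m|(\vartheta_{j,1}^2+\vartheta_{j,2}^2)$ and $|\sigma_j^{m+1}| = |\sigma_j^m|\,\vartheta_{j,1}\vartheta_{j,2}$. Since $\vec X^m$ is the identity on $S^m$, one also has $\int_{\sigma_j^m}|\nabla_{_S}\vec X^m|^2\,\dA = 2|\sigma_j^m|$. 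Applying the Cauchy--Schwarz-type inequality $A:(A-B) \geq \tfrac12(|A|^2-|B|^2)$ pointwise on each triangle gives
\begin{equation*}
\bigl(\nabla_{_S}\vec X^{m+1},\,\nabla_{_S}(\vec X^{m+1}-\vec X^m)\bigr)_{S^m} \geq \tfrac12\sum_{j=1}^{J}|\sigma_j^m|(\vartheta_{j,1}^2+\vartheta_{j,2}^2) - \sum_{j=1}^{J}|\sigma_j^m|,
\end{equation*}
and then the AM--GM bound $\vartheta_{j,1}^2+\vartheta_{j,2}^2\geq 2\vartheta_{j,1}\vartheta_{j,2}$ on each triangle converts the first sum into a lower bound by $W^{m+1}=\sum_j |\sigma_j^{m+1}|$, while the second sum is precisely $W^m$. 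This yields the desired one-step estimate.

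Finally, I would replace $m$ by $l$ in the one-step bound, sum the resulting telescoping inequalities for $l=0,1,\ldots,m-1$, and identify the right-hand side as $W^0=\sum_{j=1}^{J}|\sigma_j^0|$, completing the proof. The nondegeneracy of $S^m$ (which ensures $|\sigma_j^m|>0$ and the singular-value factorization is well defined) is assumed throughout, as in the 2D analysis.
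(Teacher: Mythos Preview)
Your proof is correct and follows essentially the same route as the paper: the same test functions, the same pointwise inequality $A:(A-B)\geq\tfrac12(\|A\|_F^2-\|B\|_F^2)$, and the same telescoping sum. The only difference is that where the paper invokes Lemma~2.1 of \cite{Barrett08JCP} for $\tfrac12\int_{\sigma_j^m}\|\nabla_{_S}\vec X^{m+1}\|_F^2\,\dA\geq|\sigma_j^{m+1}|$, you supply a direct proof via the singular values of the affine map and AM--GM, which is precisely the content of that lemma.
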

\begin{proof}
Setting $\psi^h = \tau\mathcal{H}^{m+1}$ in \eqref{eqn:3dpfem1} and $\bmath{\omega}^h = \vec X^{m+1}-\vec X^m$ in \eqref{eqn:3dpfem2}, combining these two equations yields
\begin{align}
\label{eq:stability1}
\tau\Bigl(\nabla_{_S}\mathcal{H}^{m+1},~\nabla_{_S}\mathcal{H}^{m+1}\Bigr)_{S^m} + \Bigl(\nabla_{_S}\vec X^{m+1},~\nabla_{_S}(\vec X^{m+1}-\vec X^m)\Bigr)_{S^m}=0.
\end{align}

{    By $a(a-b)\geq \frac{1}{2}(a^2-b^2)$, we have $\forall A=(a_{ij}), B=(b_{ij})\in \mathbb{R}^{3\times 3}$:
\begin{align}
A:(A-B) = \sum_{i=1}^3\sum_{j=1}^3 a_{ij}(a_{ij}-b_{ij})&\geq \frac{1}{2}\sum_{i=1}^3\sum_{j=1}^3\left(a_{ij}^2 - b_{ij}^2\right)=\frac{1}{2}\left(\vnorm{A}_{\rm F}^2 - \vnorm{B}_{\rm F}^2\right)\nn
\end{align}
with $\norm{\cdot}_{\rm F}$ representing the Frobenius norm.} Then we can compute
\begin{align}
\label{eq:stability2}
\Bigl(\nabla_{_S}\vec X^{m+1},~\nabla_{_S}(\vec X^{m+1}-\vec X^m)\Bigr)_{S^m}&=\sum_{j=1}^{J}\int_{\sigma_j^m}\nabla_{_S}\vec X^{m+1}\cdot\left(\nabla_{_S}\vec X^{m+1}-\nabla_{_S}\vec X^m\right)\,\dA\nn\\
&\geq \sum_{j=1}^{J}\int_{\sigma_j^m}\frac{1}{2}\left(\vnorm{\nabla_{_S}\vec X^{m+1}}_{\rm F}^2 - \vnorm{\nabla_{_S}\vec X^m}_{\rm F}^2\right)\dA\nn\\
&\geq \sum_{j=1}^{J}\left(|\sigma_j^{m+1}| -|\sigma_j^m|\right)\nn\\
&=W^{m+1} - W^m,
\end{align}
where the last inequality is due to the fact (see Lemma 2.1 in \cite{Barrett08JCP})
\begin{align*}
\frac{1}{2}\int_{\sigma_j^m}\vnorm{\nabla_{_S}\vec X^m}_{\rm F}^2\dA=|\sigma_j^m|,\qquad \frac{1}{2}\int_{\sigma_j^m}\vnorm{\nabla_{_S}\vec X^{m+1}}_{\rm F}^2\;\dA \geq |\sigma_j^{m+1}|.
\end{align*}
Combining \eqref{eq:stability1} and \eqref{eq:stability2}, we immediately obtain
\begin{align}
\label{eqn:stability1}
W^{m+1} + \tau\Bigl(\nabla_{_S}\mathcal{H}^{m+1},~\nabla_{_S}\mathcal{H}^{m+1}\Bigr)_{S^m}\leq W^m.
\end{align}
{    Replacing $m$ by $l$ in \eqref{eqn:stability1} and summing up it for $l$ from $0$ to $m-1$ yields \eqref{eqn:stability2}.}
\end{proof}


\subsection{The iterative solver}
\label{sec:iterative3D}

In a similar manner, by using the first-order Taylor expansion of \eqref{eqn:3dpfem} at point $\left(\vec X^{m+1,i},~\mathcal{H}^{m+1,i}\right)$, we obtain the Newton's iterative method for the computation of $\left(\vec X^{m+1},~\mathcal{H}^{m+1}\right)$ as follows: Given the initial guess $\vec X^{m+1,0}(\cdot)\in \mathbb{X}^m$ and $\mathcal{H}^{m+1,0}(\cdot)\in\mathbb{K}^m$,  for $i\ge 0$, we seek the Newton direction $\Bigl(\vec X^{\delta}(\cdot), ~\mathcal{H}^{\delta}(\cdot)\Bigr)\in\Bigl(\mathbb{X}^m,~\mathbb{K}^m\Bigr)$ such that the following two equations hold
\begin{subequations}
\label{eq:newton3d}
\begin{align}
\label{eq:newton3d1}
\left(\frac{\vec X^{\delta}}{\tau}\cdot\vec n^{m+\frac{1}{2}, i},~\psi^h\right)_{S^m}^h +\left(\frac{\vec X^{m+1,i}-\vec X^m}{\tau}\cdot\bmath{\mathcal{G}}^{m+\frac{1}{2},i}_{\vec X^\delta},~\psi^h\right)_{S^m}^h
+ \Bigl(\nabla_{_S}\mathcal{H}^{\delta},~\nabla_{_S}\psi^h\Bigr)_{S^m}\nn\\= -\Bigl(\frac{\vec X^{m+1,i}-\vec X^m}{\tau}\cdot\vec n^{m+\frac{1}{2},i},~\psi^h\Bigr)_{S^m}^h - \Bigl(\nabla_{_S}\mathcal{H}^{m+1,i},~\nabla_{_S}\psi^h\Bigr)_{S^m},\hspace{1cm}\\[0.7em]
\label{eq:newton3d2}
\Bigl(\mathcal{H}^{\delta},~\vec n^{m+\frac{1}{2}, i}\cdot\bmath{\omega}^h\Bigr)_{S^m}^h + \Bigl(\mathcal{H}^{m+1,i},~\bmath{\mathcal{G}}^{m+\frac{1}{2},i}_{\vec X^{\delta}}\cdot\bmath{\omega}^h\Bigr)_{S^m}^h - \Bigl(\nabla_{_S}\vec X^{\delta},~\nabla_{_S}\bmath{\omega}^h\Bigr)_{S^m}\hspace{1.5cm}\nn\\= -\Bigl(\mathcal{H}^{m+1,i},~\vec n^{m+\frac{1}{2},i}\cdot\bmath{\omega}^h\Bigr)_{S^m}^h+\Bigl(\nabla_{_S}\vec X^{m+1,i},~\nabla_{_S}\bmath{\omega}^h\Bigr)_{S^m},\hspace{2.1cm}
\end{align}
\end{subequations}
for any $\Bigl(\bmath{\omega}^h,~\psi^h\Bigr)\in\Bigl(\mathbb{X}^m,~\mathbb{K}^m\Bigr)$, where $\vec n^{m+\frac{1}{2},i}$ and $\bmath{\mathcal{G}}^{m+\frac{1}{2},i}_{\vec X^{\delta}}$ are piecewise constant vectors over $S^m$. That is, on each triangle $\sigma_j^m$, $1\leq j\leq J$, we define them as follows:
\begin{align*}
&\left.\vec n^{m+\frac{1}{2}, i}\right|_{\sigma_j^m} =\frac{\bmath{\mathcal{J}}\left\{\sigma_j^m\right\}+4\bmath{\mathcal{J}}\{\sigma_j^{m+\frac{1}{2}, i}\}+\bmath{\mathcal{J}}\left\{\sigma_j^{m+1, i}\right\}}{6\,\abs{\bmath{\mathcal{J}}\{\sigma_j^m\}}},\\[0.6em]
&\left.\bmath{\mathcal{G}}^{m+\frac{1}{2},i}_{\vec X^\delta}\right|_{\sigma_j^m} = \frac{\vec g^{23}\times \vec X^{\delta}(\vec q_{j_1}^m) + \vec g^{31}\times\vec X^{\delta}(\vec q_{j_2}^m) + \vec g^{12}\times\vec X^{\delta}(\vec q_{j_3}^m)}{6\abs{\bmath{\mathcal{J}}\{\sigma_j^m\}}},
\end{align*}
where $\sigma_j^{m+1,i} = \triangle\Bigl\{\vec X^{m+1,i}(\vec q_{j_k}^m)\Bigr\}_{k=1}^3$, $\sigma_{j}^{m+\frac{1}{2},i}=\triangle\Bigl\{\frac{\vec q_{j_k}^m + \vec X^{m+1,i}(\vec q_{j_k}^m)}{2}\Bigr\}_{k=1}^3$ and
\begin{align}
\vec g^{lk} = 2\vec X^{m+1,i}(\vec q_{j_k}^m) + \vec X^m(\vec q_{j_k}^m) - 2\vec X^{m+1,i}(\vec q_{j_l}^m) - \vec X^m(\vec q_{j_l}^m),\quad 1\leq l,k\leq 3.\nn
\end{align}
We then update
\begin{align}
\label{eq:3dupdate}
\vec X^{m+1,i+1}=\vec X^{m+1,i}+\vec X^{\delta},\qquad\mathcal{H}^{m+1,i+1}=\mathcal{H}^{m+1,i}+\mathcal{H}^{\delta}.
\end{align}

 For each $m\geq 0$, we can choose the initial guess $\vec X^{m+1,0}=\vec X^m$, $\mathcal{H}^{m+1,0}=\mathcal{H}^m$, and then repeat the iterations in \eqref{eq:newton3d} and \eqref{eq:3dupdate} until the following conditions hold
\begin{align*}
\norm{\vec X^{m+1,i+1}-\vec X^{m+1,i}}_{\infty}=\max_{1\leq j\leq K}\left|\vec X^{m+1,i+1}(\vec q_{j}^m) - \vec X^{m+1,i}(\vec q_{j}^m)\right|\leq {\rm tol},\\
\norm{\mathcal{H}^{m+1,i+1}-\mathcal{H}^{m+1,i}}_{\infty}=\max_{1\leq j\leq K}\left|\mathcal{H}^{m+1,i+1}(\vec q_{j}^m) - \mathcal{H}^{m+1,i}(\vec q_{j}^m)\right|\leq {\rm tol}.
\end{align*}

\begin{remark}
Although it seems not easy to prove the well-posedness of the linear system \eqref{eq:newton3d}, we observe in practice the iteration method performs well with a very fast convergence provided that the computational meshes don't deteriorate. Fortunately, this is guaranteed by the good mesh property of our method, as discussed in Remark \ref{rk:3dgoodmesh}.
\end{remark}

\begin{remark}It is also possible to consider the Picard iteration for computing the resulting nonlinear system in Eq.~\eqref{eqn:3dpfem}. In the $i$-th iteration, we directly seek $\Bigl(\vec X^{m+1,i+1}(\cdot),~\mathcal{H}^{m+1,~i+1}(\cdot)\Bigr)\in\Bigl(\mathbb{X}^m,~\mathbb{K}^m\Bigr)$ such that for any pair element $\Bigl(\bmath{\omega}^h,~\psi^h\Bigr)\in\Bigl(\mathbb{X}^m,~\mathbb{K}^m\Bigr)$ it holds
\begin{subequations}
\label{eqn:picard3d}
\begin{align}
\label{eqn:picard3d1}
&\left(\frac{\vec X^{m+1, i+1}-\vec X^m}{\tau}\cdot\vec n^{m+\frac{1}{2}, i},~\psi^h\right)_{S^m}^h + \Bigl(\nabla_{_S}\mathcal{H}^{m+1,i+1},~\nabla_{_S}\psi^h\Bigr)_{S^m} = 0,\\[0.6em]
\label{eqn:picard3d2}
&\Bigl(\mathcal{H}^{m+1, i+1},~\vec n^{m+\frac{1}{2}, i}\cdot\bmath{\omega}^h\Bigr)_{S^m}^h - \Bigl(\nabla_{_S}\vec X^{m+1, i+1},~\nabla_{_S}\bmath{\omega}^h\Bigr)_{S^m} = 0.
\end{align}
\end{subequations}
Similar to the previous work in \cite{Barrett08JCP}, it is easy to show that the linear system \eqref{eqn:picard3d} admits a unique solution under some weak assumptions on $\vec n^{m+\frac{1}{2},i}$. Unlike the proposed Newton' iterative method above, the Picard iterative's method only require an initial guess $\vec X^{m+1,0}$ during the iterations.
\end{remark}


\section{Numerical results}
\label{se:num}
We present several numerical experiments, including a convergence study, to test the SP-PFEM \eqref{eqn:dis2d} for 2D in \cref{sec:nu2d}  and \eqref{eqn:3dpfem} for 3D in \cref{sec:nu3d}, respectively.

In the Newton iterations, the two linear systems in \eqref{eq:newton} and \eqref{eq:newton3d} are directly solved via the sparse LU decomposition or the GMRES with preconditioner based on the incomplete LU factorization, and the iteration tolerance is chosen as ${\rm tol}=10^{-10}$.

\subsection{For closed curves in 2D}
\label{sec:nu2d}


%
\begin{table}[tbh]
\caption{Error $e_{h,\tau}$ and the rate of convergence  at three different times by using \eqref{eqn:dis2d}. The initial shapes are chosen as a $(5.6, 0.8)$ rectangle (upper panel) and an ellipse: $\frac{x^2}{2.8^2}+\frac{y^2}{0.4^2}=1$ (lower panel). In the coarse mesh,  $h=h_0=2^{-5}$, $\tau_0=0.02$.}
\label{tb:2derror}
\begin{center}
 \begin{tabular}{@{\extracolsep{\fill}}|c|cc|cc|cc|}\hline
$(h,~\tau)$ &$e_{h,\tau}(t=0.2)$ & order &$e_{h,\tau}(t=0.5)$
& order &$e_{h,\tau}(t=2.0)$ & order  \\ \hline
$(h_0,~\tau_0)$ & 5.23E-2 & - &1.05E-1 &-& 1.12E-1 &- \\ \hline
$(\frac{h_0}{2},~\frac{\tau_0}{4})$ & 1.33E-2 & 1.97 &2.66E-2 &1.97& 2.80E-2 &2.00
\\ \hline
$(\frac{h_0}{2^2},~\frac{\tau_0}{4^2})$ & 3.16E-3 & 2.07 &6.53E-3 &2.03& 7.01E-3 &2.00 \\ \hline
$(\frac{h_0}{2^3},~\frac{\tau_0}{4^3})$ & 7.38E-4 & 2.10 &1.59E-3 &2.04 &1.75E-3  &2.00\\ \hline
 \end{tabular}\vspace{0.8em}
\begin{tabular}{@{\extracolsep{\fill}}|c|cc|cc|cc|}\hline
$(h,~\tau)$ &$e_{h,\tau}(t=0.2)$ & order &$e_{h,\tau}(t=0.5)$ & order &$e_{h,\tau}(t=2.0)$ & order  \\ \hline
$(h_0,~\tau_0)$ & 3.50E-2 & - &5.59E-2 &-& 2.12E-2 &- \\ \hline
$(\frac{h_0}{2},~\frac{\tau_0}{4})$ & 7.88E-3 & 2.15 &1.36E-2 &2.04& 5.30E-3 &2.00
\\ \hline
$(\frac{h_0}{2^2},~\frac{\tau_0}{4^2})$ & 1.78E-3 & 2.14 &3.27E-3 &2.05& 1.33E-3 &2.00 \\ \hline
$(\frac{h_0}{2^3},~\frac{\tau_0}{4^3})$ & 4.20E-4 & 2.08 &7.97E-4 &2.04 &3.32E-4  &2.00\\ \hline
 \end{tabular}
\end{center}
 \end{table}

We test the convergence rate of the numerical method in \eqref{eqn:dis2d} by carrying out simulations using different mesh sizes and time step sizes. To measure the difference between two different closed curves $\Gamma_1$ and $\Gamma_2$, we adopt the manifold distance in \cite{Zhao19b}. Let $\Omega_1$ and $\Omega_2$ be the inner regions enclosed by $\Gamma_1$ and $\Gamma_2$, respectively, then the manifold distance is given by the area of the symmetric difference region between $\Omega_1$ and $\Omega_2$ \cite{Zhao19b}:
\begin{equation}\label{MG1G2}
{\rm M}(\Gamma_1,~\Gamma_2):= |\left(\Omega_1\backslash \Omega_2 \right)\cup\left( \Omega_2\backslash\Omega_1\right)|=2|\Omega_1\cup\Omega_2|-|\Omega_1|-|\Omega_2|,
\end{equation}
where $|\Omega|$ denotes the area of $\Omega$.

We denote by $\vec X_{h,\tau}^m$ the numerical approximation of the curve $\Gamma(t_m)$ using mesh size $h$ and time step size $\tau$. {    We use the time step size $\tau = O(h^2)$ due to that the discretization is first order in temporal discretiztion and second order in spatial discretization,} and the numerical errors are computed based on the manifold distance in \eqref{MG1G2} as
\begin{align}
e_{h,\tau}(t_m):={\rm M}(\vec X_{h,\tau}^m,~\vec X_{\frac{h}{2},\frac{\tau}{4}}^m),\qquad m\geq 0.
\label{eq:2derror}
\end{align}
 Initially, two different closed curves are considered: \vspace{0.4em}
\begin{itemize}
\item ``Shape 1'': a rectangle curve with $(5.6, 0.8)$ representing its length and width.
\item ``Shape 2'': an ellipse curve given by $\frac{x^2}{2.8^2} + \frac{y^2}{0.4^2} = 1$.
\end{itemize}
\vspace{0.4em}
Numerical errors are reported in Table \ref{tb:2derror}, where we observe the order of convergence can reach about $2$ in spatial discretization.

\begin{figure}[tbh]
\centering
\includegraphics[width=0.99\textwidth]{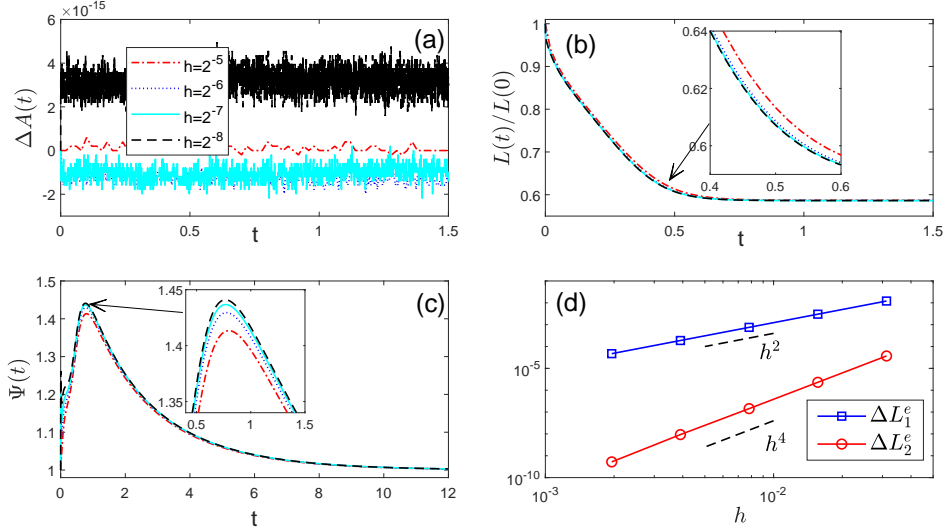}
\caption{The time history of (a) the relative area loss $\Delta A(t)$, (b) the normalized perimeter $L(t)/L(0)$ and (c) the mesh ratio indicator $\Psi(t)$ obtained by using different mesh sizes $h$ with $\tau=20.48h^2$. (d) The log-log plot of $\Delta L_1^e$ and $\Delta L_2^e$ versus the mesh size h.}
\label{fig:massenergy2d}
\end{figure}

To further assess the performance of our numerical method, we define the relative area change $\Delta A(t)$, the mesh ratio indicator $\Psi(t)$ and the perimeter errors $\Delta L^e_1$, $\Delta L^e_2$ at equilibrium:
\begin{align}
&\left.\Delta A(t)\right|_{t=t_m}:=\frac{A^m-A^0}{A^0},\qquad \left.\Psi(t)\right|_{t=t_m} = \Psi^m,\nn\\
&\Delta L^e_1:=\lim\limits_{m\to\infty}(L^m - 2\sqrt{A^0\pi}),\quad \Delta L^e_2: = \Delta L_1^e - \frac{\sqrt{A^0\pi}\pi^2}{3}h^2,\qquad m\geq 0,\nn
\end{align}
where $A^m$ and $L^m$ are given by \eqref{eq:disAreaLength},  and $\Psi^m$ is given by \eqref{eq:MRI} for the polygonal curve $\Gamma^m$.  We show the time evolution of $\Delta A(t)$ and the normalized perimeter $L(t)/L(0)$ in Fig.~\ref{fig:massenergy2d}(a),(b), respectively. It can be seen that the total area is conserved up to the machine precision under different mesh sizes, and the perimeter decreases in time. This numerically substantiates Theorem \ref{thm:2darea} and Theorem \ref{thm:2dstability}.

To examine the mesh quality during the simulations, we plot the mesh ratio indicator $\Psi(t)$ versus time in Fig.~\ref{fig:massenergy2d}(c). It is found that the mesh ratio indicator first increases to a small critical value and then gradually decreases to approximate $1$. This implies the mesh points on the polygonal curve tend to be equally distributed in the long time limit. Besides, from Fig.~\ref{fig:massenergy2d}(d), we observe that by refining the mesh size $h$, the perimeter errors $\Delta L_1^e$ and $\Delta L_2^e$ can achieve second-order and fourth-order convergence, respectively, as expected by Proposition \ref{prop:full1}.

\begin{figure}[tbh]
\centering
\includegraphics[width=0.99\textwidth]{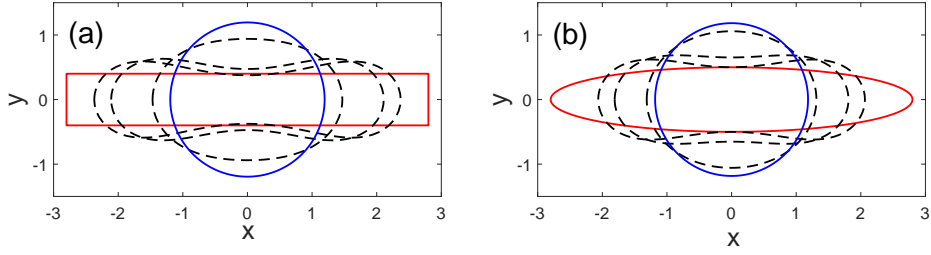}
\caption{Morphological evolutions of the closed curves towards their equilibrium shapes (blue solid line) using different initial shapes (red solid line): (a) ``Shape 1'', and (b) ``Shape 2''.}
\label{fig:Evolution}
\end{figure}

The evolutions of the curves by using the two initial shapes are depicted in Fig.~\ref{fig:Evolution}. We observe the two curves form the circle as the equilibrium shapes. We also assess the performance of the Picard iteration \eqref{eqn:picarddis2d} and the Newton's iteration \eqref{eq:newton} during the simulations. We recall that the two linear systems are solved directly with sparse LU decomposition, therefore the difference between the CPU time for each iteration is negligible. The iteration numbers for the two iterative methods are compared in Fig.~\ref{fig:PiNewC}. The Newton's method is observed to outperform the Picard iteration, since less number of iterations is needed for the former one.

\begin{figure}[!htb]
\centering
\includegraphics[width=0.8\textwidth]{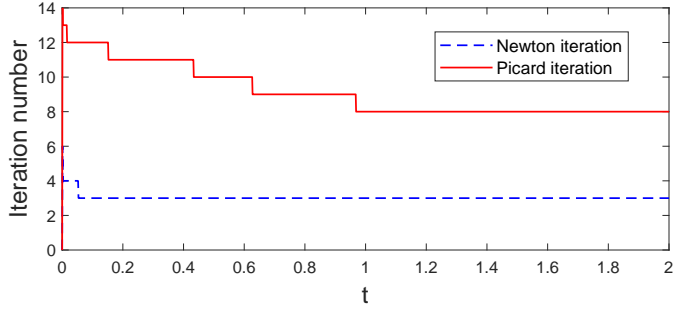}
\caption{A comparison between the number of iterations used in each time step by the Newton's method in \eqref{eq:newton} and the Picard iteration in \eqref{eqn:picarddis2d}, where we choose $h=2^{-7}, \tau=1.25\times 10^{-3}$, and ``Shape 1'' is used.}
\label{fig:PiNewC}
\end{figure}

%
\begin{figure}[!htb]
\centering
\includegraphics[width=0.95\textwidth]{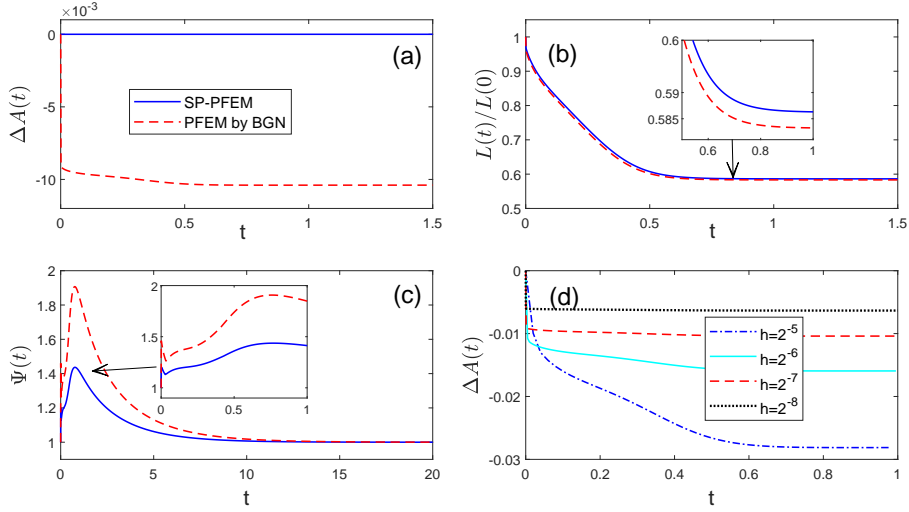}
\caption{Comparison between the SP-PFEM \eqref{eqn:dis2d} and the PFEM by BGN in \cite{Barrett07}: (a) the relative area loss  $\Delta A(t)$, (b) the normalized perimeter $L(t)/L(0)$, and (c) the mesh ratio indicator $\Psi(t)$; where we choose $h=2^{-7}, \tau=1.25\times 10^{-3}$. (d) The relative area loss $\Delta A(t)$ for the PFEM by BGN in \cite{Barrett07} with different mesh sizes $h$ and time step sizes $\tau=20.48h^2$. Here ``Shape 1'' is used. }
\label{fig:BGNC}
\end{figure}

We next conduct a comparison of our SP-PFEM and the PFEM by BGN in \cite{Barrett07}, and the numerical results are reported in Fig.~\ref{fig:BGNC}. Based on the observation, we can draw the following conclusions: (i) the time evolutions of the normalized perimeter show very good agreement between the two methods; (ii) the equal mesh distribution is achieved in the long time limit for both methods; and (iii) unlike the SP-PFEM, the PFEM by BGN in \cite{Barrett07} fails to conserve the area exactly and suffers an area loss {    up to one percent for $h=2^{-7}$, $\tau=1.25\times 10^{-3}$ or smaller, and a more detailed investigation of the area loss for the PFEM by BGN has been conducted in \cite{Barrett07,Zhao20}.}

\begin{figure}[!htb]
\centering
\includegraphics[width=0.8\textwidth]{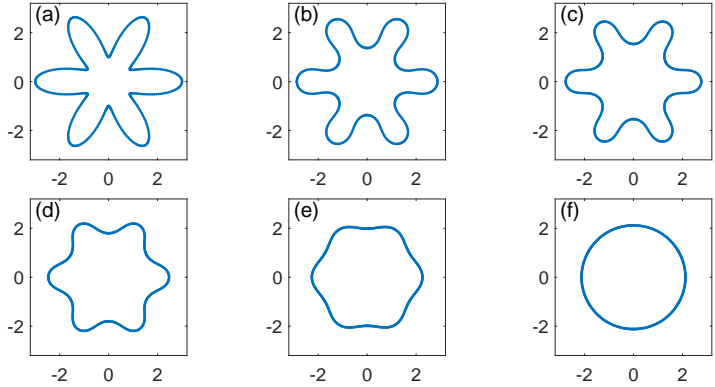}
\caption{Several snapshots in the evolution of an initially non-convex curve towards the equilibrium, where (a) $t=0$; (b) $t=0.01$; (c) $t=0.03$; (d) $t=0.06$; (e) $t=0.08$; (f) $t=0.15$.  Parameters are chosen as $h = 2^{-9}, \tau = 10^{-4}$, and the initial curve is given by Eq.~\eqref{eq:flower}. }
\label{fig:flower}
\end{figure}
\begin{figure}[!htb]
\centering
\includegraphics[width=0.95\textwidth]{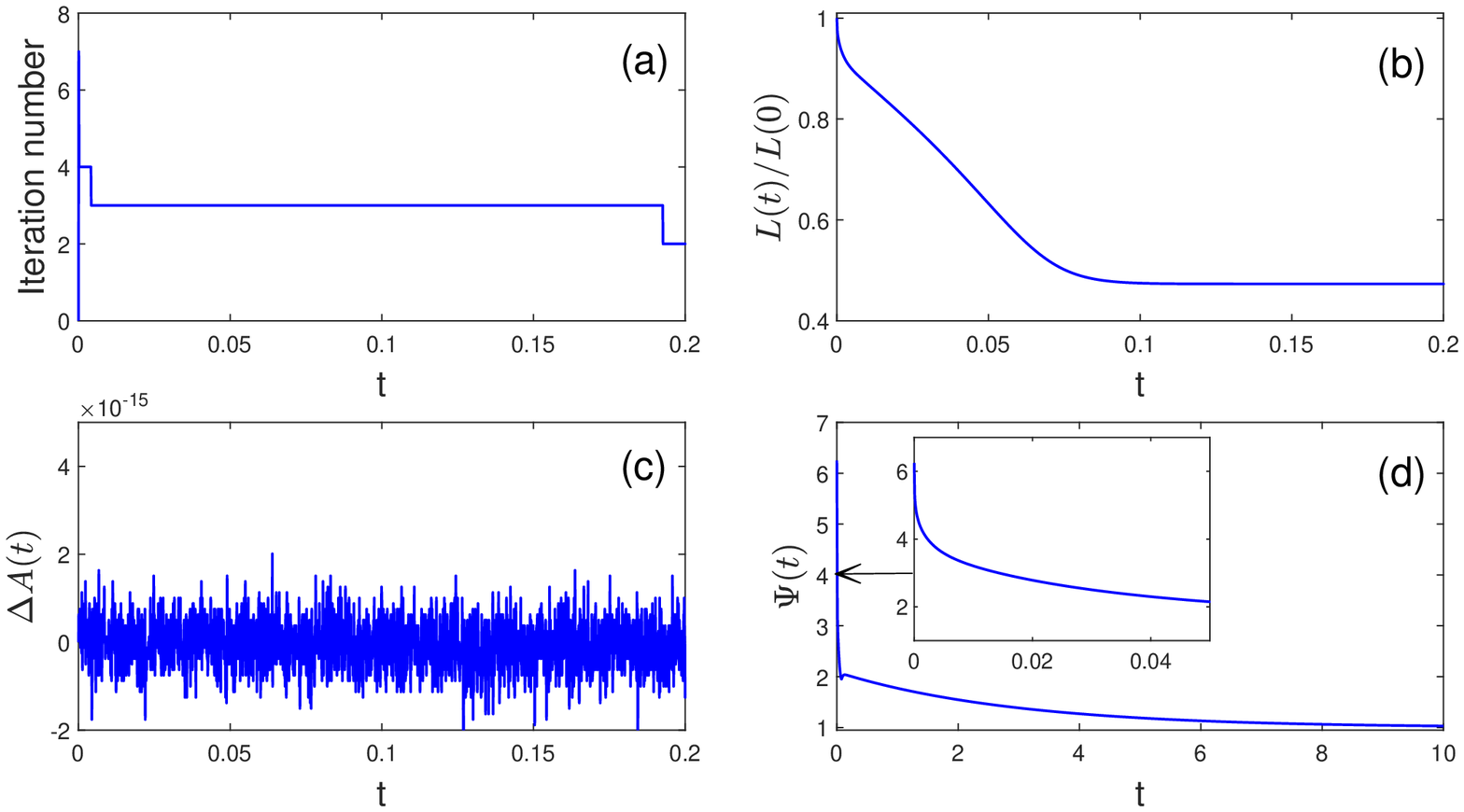}
\caption{(a) The iteration number in each time step by using the Newton's method in \eqref{eq:newton}. (b) The normalized perimeter $L(t)/L(0)$. (c) The relative area loss $\Delta A(t)$. (d) The mesh ratio indicator $\Psi(t)$. Parameters are chosen as $h = 2^{-9}, \tau = 10^{-4}$, and the initial curve is given by Eq.~\eqref{eq:flower}. }
\label{fig:flowerQ}
\end{figure}
%


We end this subsection by applying our SP-PFEM to two more complex shapes give by:

Case I. ``Shape of a flower'' with six petals:
\begin{align}
\left\{\begin{array}{l}x = [2+\cos(6\theta)]\cos\theta,\\[0.3em]
y = [2+\cos(6\theta)]\sin\theta,
\end{array}\right.\qquad\theta\in[0,~2\pi].
\label{eq:flower}
\end{align}

Case II.  ``Shape of an astroid'' with four cusps:
\begin{align}
\left\{\begin{array}{l}x =\frac{3}{4}\left[3\cos\theta + \cos(3\theta)\right],\\[0.3em]
y = \frac{3}{4}\left[3\sin\theta - \sin(3\theta)\right],
\end{array}\right.\qquad\theta\in[0,~2\pi].
\label{eq:astroid}
\end{align}

The discretization of the initial curve results from a uniform partition of the polar angle $\theta$. This yields polygonal curve with non-uniform distribution with respect to the arc length. In the simulations, we use parameters $h=2^{-9}$, $\tau=10^{-4}$. Fig.~\ref{fig:flower} depicts the curve evolution for the initial flower shape in \eqref{eq:flower}. It can be seen that the six petals gradually disappear in order to form a final circle as the equilibrium shape. The time evolution of several numerical quantities are shown in Fig.~\ref{fig:flowerQ}, where we observe the decrease of perimeter, the conservation of area as well as the long time equal mesh distribution.

\begin{figure}[!htp]
\centering
\includegraphics[width=0.8\textwidth]{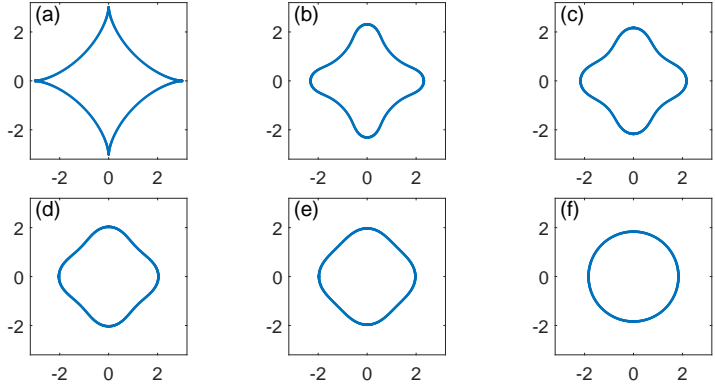}
\caption{Several snapshots in the evolution of an initially non-convex curve towards the equilibrium, where (a) $t=0$; (b) $t=0.01$; (c) $t=0.03$; (d) $t=0.06$; (e) $t=0.08$; (f) $t=0.50$. Parameters are chosen as $h = 2^{-9}, \tau = 10^{-4}$, and the initial curve is given by Eq.~\eqref{eq:astroid}. }
\label{fig:astroid}
\end{figure}

\begin{figure}[!htb]
\centering
\includegraphics[width=0.95\textwidth]{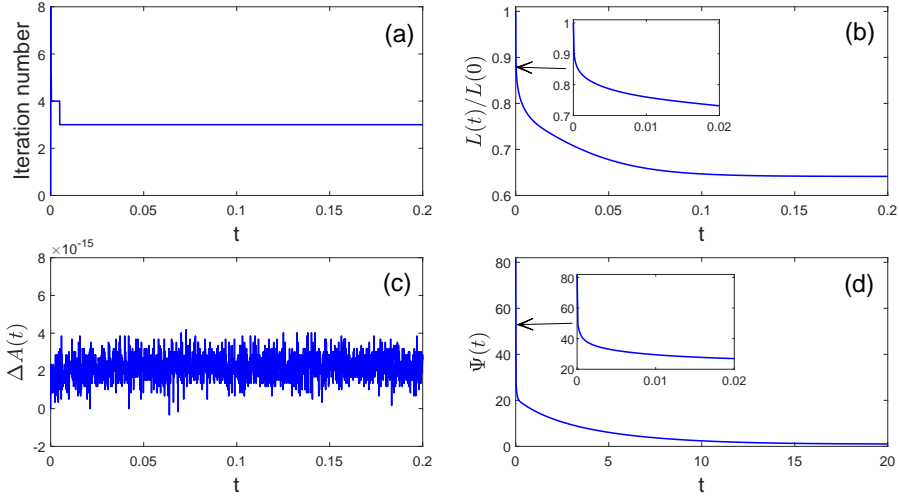}
\caption{(a) The iteration number in each time step by using the Newton's method in \eqref{eq:newton}. (b) The normalized perimeter $L(t)/L(0)$. (c) The relative area loss $\Delta A(t)$. (d) The mesh ratio indicator $\Psi(t)$.  Parameters are chosen as $h = 2^{-9}, \tau = 10^{-4}$, and the initial curve is given by Eq.~\eqref{eq:astroid}.}
\label{fig:astroidQ}
\end{figure}

Analogous numerical results for the astroid are depicted in Fig.~\ref{fig:astroid} and Fig.~\ref{fig:astroidQ}. Due to the presence of cusps, we find the mesh ratio indicator $\Psi(t)$ begins with a large value. As time evolves, the decease of $\Psi(t)$ is still observed and  the equal distribution is reached finally. These two numerical examples demonstrate the applicability and reliability of our proposed numerical method SP-PFEM.

\subsection{For closed surfaces in 3D}
\label{sec:nu3d}


\begin{table}[!htp]
\caption{Error $\widetilde{e}_{h,\tau}$ and the rate of convergence for the dynamic surface at three different times. The numerical results are obtained using \eqref{eqn:3dpfem} with initial shape given by a $(4,1,1)$ cuboid, and $h_0=0.25$, $\tau_0=0.01$.}
\label{tb:3derror}
\begin{center}
 {\rule{0.96\textwidth}{1pt}}
\begin{tabular}{@{\extracolsep{\fill}}|c|cc|cc|cc|}
$(h,\,\tau)$ &$\widetilde{e}_{h,\tau}(t=0.08)$ & order &$\widetilde{e}_{h,\tau}(t=0.2)$
& order &$\widetilde{e}_{h,\tau}(t=0.3)$ & order  \\ \hline
$(h_0,\,\tau_0)$ & 3.72E-2 & - &5.30E-2 &-& 3.91E-2 &- \\ \hline
$(\frac{h_0}{2},\, \frac{\tau_0}{4})$ & 1.06E-2 & 1.81 &1.34E-2 &1.98& 9.92E-3 &1.98
\\ \hline
$(\frac{h_0}{2^2},\,\frac{\tau_0}{4^2})$ & 2.99E-3 & 1.83 &3.53E-3 &1.92& 2.81E-3 &1.82
 \end{tabular}
  {\rule{0.96\textwidth}{0.8pt}}
\end{center}
 \end{table}

We test the convergence rate of the numerical method SP-PFEM \eqref{eqn:3dpfem} by using the example of an initial $(4,1,1)$ cuboid with $(4,1,1)$ representing its length, width, and height. We note the manifold distance in \eqref{MG1G2} can be readily extend to 3D. However, practical computations involving two polygonal surfaces can be rather complicated and tedious. Therefore, given
\begin{align*}
S:=\cup_{j=1}^J\overline{\sigma_j}\quad{\rm with\; vertices} \quad \left\{\vec q_k\right\}_{k=1}^K,\\
S^\prime=\cup_{j=1}^{J^\prime}\overline{\sigma^\prime_j}\quad{\rm  with\;vertices} \quad \left\{\vec q_k^\prime\right\}_{k=1}^{K^\prime},
\end{align*}
we consider the manifold distance in $L^{\infty}$-norm
\begin{align}
\label{eq:manifold}
\mathcal{M}\left(S,~S^\prime\right) = \frac{1}{2}\Bigl(\max_{1 \leq k \leq K^\prime}\min_{1 \leq j \leq J}\,{\rm dist\left(\vec q_k^\prime,~\sigma_j\right)} + \max_{1 \leq k \leq K}\min_{1 \leq j \leq J^\prime}\,{\rm dist\left(\vec q_k,~\sigma_j^\prime\right)}\Bigr),
\end{align}
where ${\rm dist}(\vec q,~\sigma)=\inf_{\vec p\in\sigma}\vnorm{\vec p -\vec q}$ represents the distance of the vertex $\vec q$ to the triangle $\sigma$. Analogous to
Eq.~\eqref{eq:2derror}, the numerical errors are computed by comparing $\vec X_{h,\tau}$ and $\vec X_{\frac{h}{2},\frac{\tau}{4}}$
\begin{equation}
\widetilde{e}_{h,\tau}(t=t_m):=\mathcal{M}(\vec X_{h,\tau}^m,~\vec X^m_{\frac{h}{2},\frac{\tau}{4}}),\qquad m\geq 0.
\end{equation}
In these expressions, the mesh size $h$ is defined according to the initial discretization $S^0=\cup_{j=1}^J\overline{\sigma_j^0}$ such that $h= \max_{j=1}^J\sqrt{\abs{\sigma_j^0}}$, and $\vec X_{h,\tau}^m$ represents the numerical solution of $S(t_m)$ obtained using mesh size $h$ and time step size $\tau$. In the convergence test, the numerical solutions are obtained on different meshes:
\begin{align}
(K,~J,~h)=(146,288,2^{-2}),~(578,1152,2^{-3}),~(2306,4608,2^{-4}),~(9218,18432,2^{-5}).\nn
\end{align}
Numerical errors are reported in Table \ref{tb:3derror}. It can be seen that the order of the convergence for the numerical solutions can achieve around $2$ in spatial discretization.

\begin{figure}[!htp]
\centering
\includegraphics[width=0.9\textwidth]{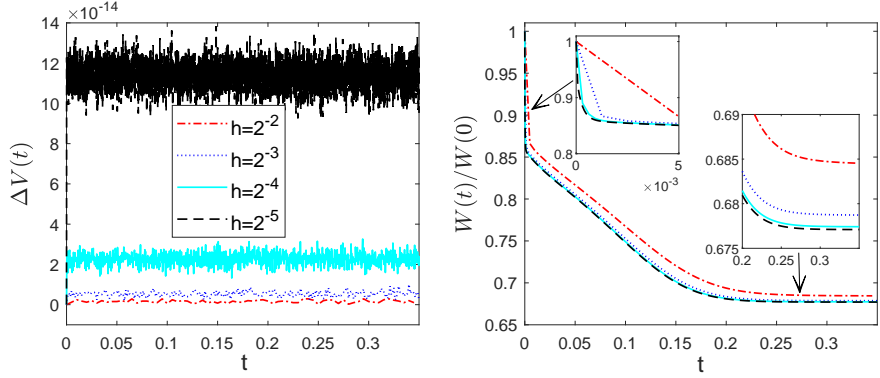}
\caption{Time history of the relative volume loss $\Delta V(t)$ (left panel) and the normalized surface area $W(t)/W(0)$ (right panel) by using different mesh sizes $h$ with $\tau = \frac{2}{25}h^2$. The initial shape is chosen as a $(4,~1,~1)$ cuboid.}
\label{fig:ME3}
\end{figure}

 \begin{figure}[!htp]
\centering
\includegraphics[width=0.95\textwidth]{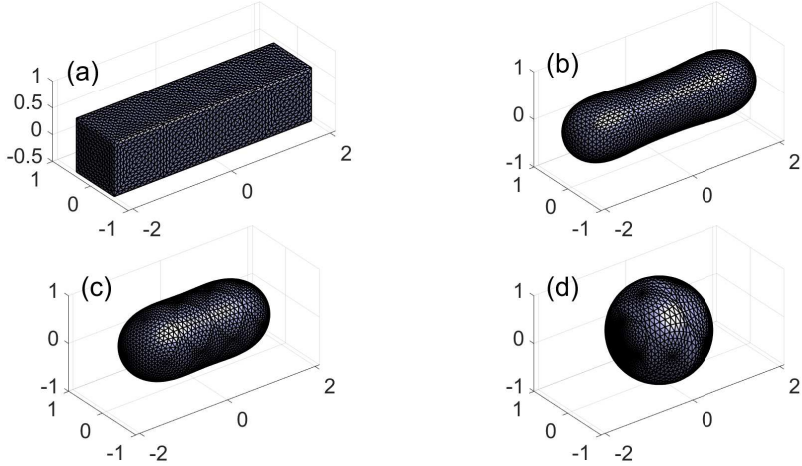}
\caption{Evolution of the polygonal mesh for an initial $(4,~1,~1)$ cuboid. (a) $t=0$; (b) $t=0.01$; (c) $t=0.1$; (d) $t=0.35$, where $h=2^{-4}$ and $\tau = 3.125\times 10^{-4}$.}
\label{fig:3dmeshE}
\end{figure}

The time evolution of the relative volume loss and the normalized surface area are depicted  in Fig.~\ref{fig:ME3}. The relative volume loss is defined as
\begin{equation}
\left.\Delta V(t)\right|_{t=t_m}:=\frac{V^m - V^0}{V^0},\qquad m\geq 0,\nn
\end{equation}
with $V^m$ given by \eqref{eq:3dVolume}. We observe the exact conservation of the volume and decrease of the surface area for the numerical solutions using different mesh sizes and time steps. Furthermore, the dynamic convergence of the normalised surface area is confirmed by refining the mesh size.


The evolution of the surface mesh with $(K,~J)=(2306,~4608)$ are shown in Fig.~\ref{fig:3dmeshE}. We observe that the sharp corners of the initial cuboid become rounded, and finally, the cuboid forms a spherical shape as the equilibrium. In particular, we observe the good mesh quality of the polygonal surface even though the re-meshing procedure is not applied. We also assess the performance of the Picard iteration \eqref{eqn:picard3d} and the Newton's iteration \eqref{eq:newton3d} during the simulations. Similar to the 2D case, the Newton method is observed to outperform the Picard iteration, as shown in Fig.~\ref{fig:3dNP}.

\begin{figure}[!htp]
\centering
\includegraphics[width=0.65\textwidth]{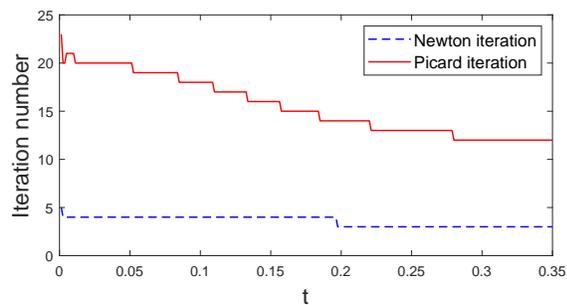}
\caption{A comparison between the number of iterations used in each time step by the Newton's method in \eqref{eq:newton3d} and the Picard iteration in \eqref{eqn:picard3d}, where $h=2^{-3}$ and $\tau=1.25\times 10^{-3}$.}
\label{fig:3dNP}
\end{figure}

\begin{figure}[!htp]
\centering
\begin{minipage}[b]{1\linewidth}
\centering
\includegraphics[width=0.45\textwidth]{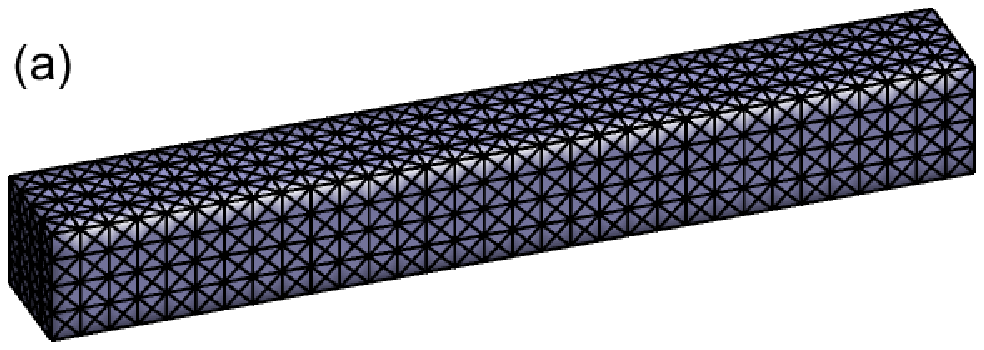}
\hspace{0.8cm}
\includegraphics[width=0.45\textwidth]{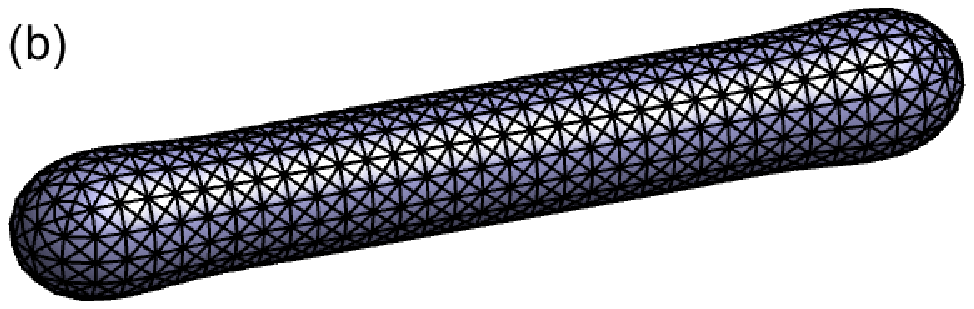}
\end{minipage}
\begin{minipage}[b]{1\linewidth}
\centering
\includegraphics[width=0.45\textwidth]{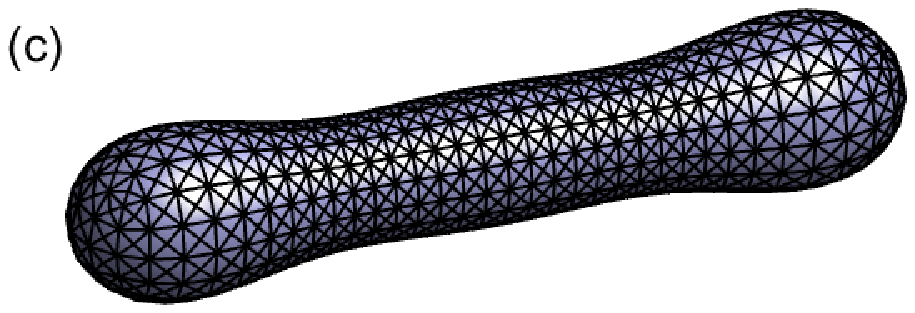}
\hspace{0.8cm}
\includegraphics[width=0.45\textwidth]{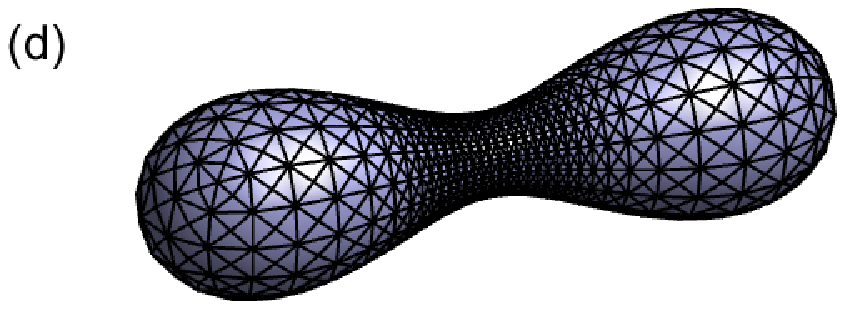}
\end{minipage}
\begin{minipage}[b]{1\linewidth}
\centering
\includegraphics[width=0.45\textwidth]{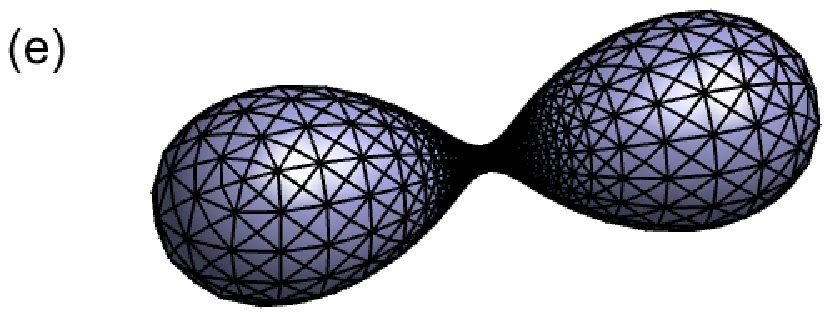}
\hspace{0.8cm}
\includegraphics[width=0.45\textwidth]{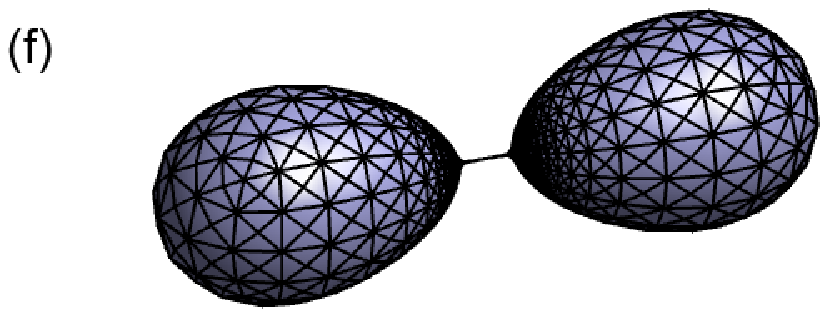}
\end{minipage}
\caption{Several snapshots in the evolution of an initial $(8,~1,~1)$ cuboid until its pinch-off. (a) $t=0$; (b) $t=0.01$; (c) $t=0.1$; (d) $t=0.3$; (e) $t=0.365$; (f) $t=0.370$.}
\label{fig:prism8}
\end{figure}

\begin{figure}[!htp]
\centering
\includegraphics[width=0.95\textwidth]{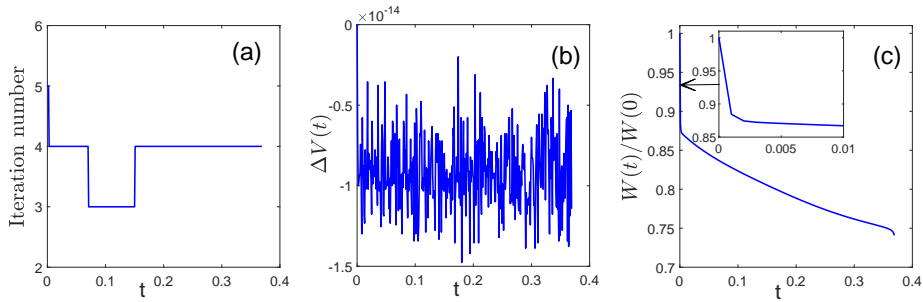}
\caption{(a) The iteration number in each time step by using the Newton's method \eqref{eq:newton3d}. (b) The relative volume loss $\Delta V(t)$. (c) The normalized surface area  $W(t)/W(0)$.}
\label{fig:prism8Q}
\end{figure}

We next consider the shape evolution of an initial $(8,1,1)$ cuboid. Due to the presence of sharp corners, the shape evolves very fast at the very beginning stage (see \cite{Bansch05,Barrett08JCP}). Therefore adaptive time steps are usually required for the simulations in order to accurately predict the pinch-off time. In the current example, we discretize the cuboid into $J=2176$ triangles with $K=1090$ vertices, and choose a uniform time step $\tau=10^{-3}$ for the simulation. Fig.~\ref{fig:prism8} depicts the morphological evolution of the cuboid, where we observe the pinch-off event happens at the time $t=0.370$. This shows a high level of consistency with previous result obtained by using adaptive time steps (see \cite{Barrett08JCP}). In Fig.~\ref{fig:prism8Q}, we plot the iteration number used in each time step, the relative volume loss and the normalized surface area versus time. We find in most time steps, only $4$ iterations are required in the Newton's method, thus it is efficient. We also observe the exact conservation of the volume and decrease of the surface area, as expected by Theorem \ref{thm:3dmassconserve} and Theorem \ref{thm:3dEnergystability}.


\begin{figure}[tph]
\centering
\begin{minipage}[b]{0.93\linewidth}
\centering
\includegraphics[width=0.95\textwidth]{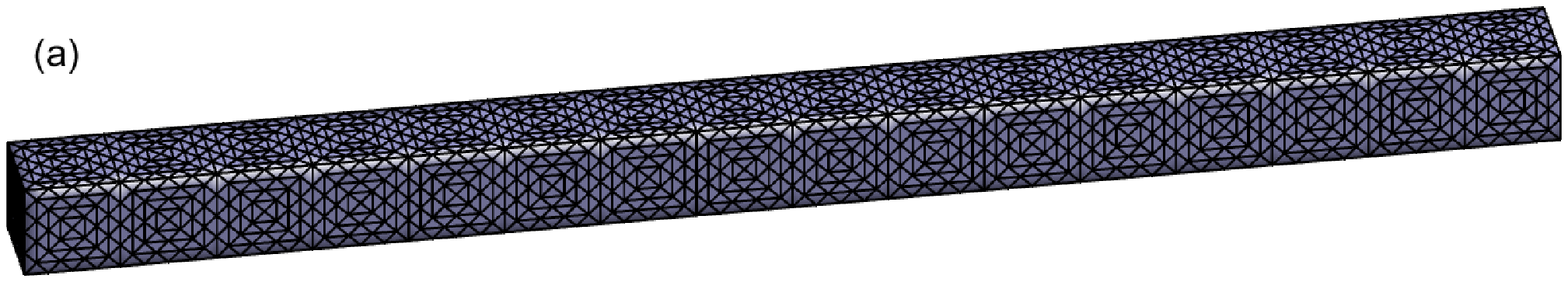}
\end{minipage}
\begin{minipage}[b]{0.92\linewidth}
\centering
\includegraphics[width=.95\textwidth]{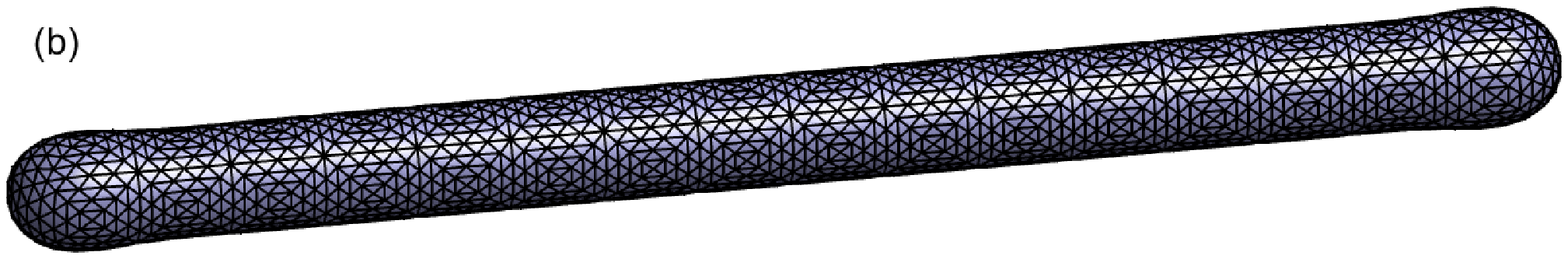}
\end{minipage}
\begin{minipage}[b]{0.92\linewidth}
\centering
\includegraphics[width=0.95\textwidth]{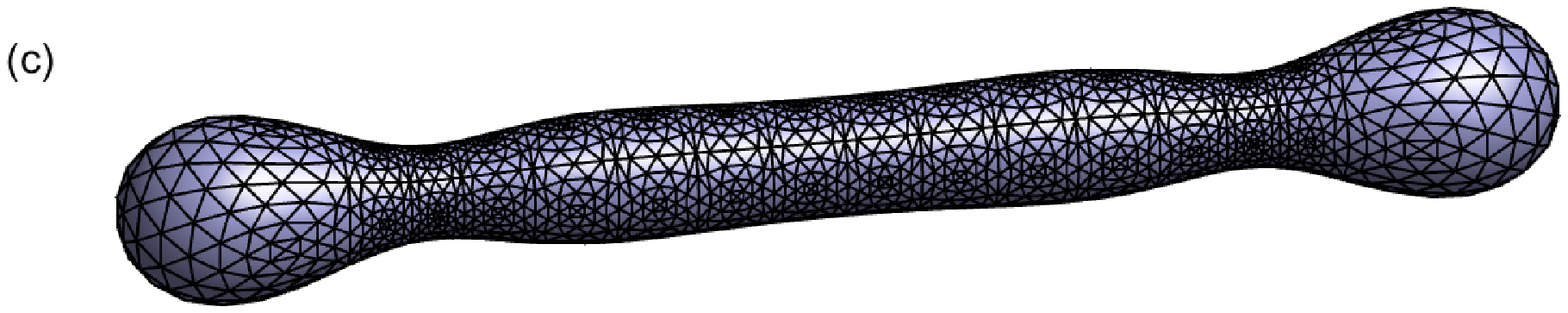}
\end{minipage}
\begin{minipage}[b]{0.92\linewidth}
\centering
\includegraphics[width=0.95\textwidth]{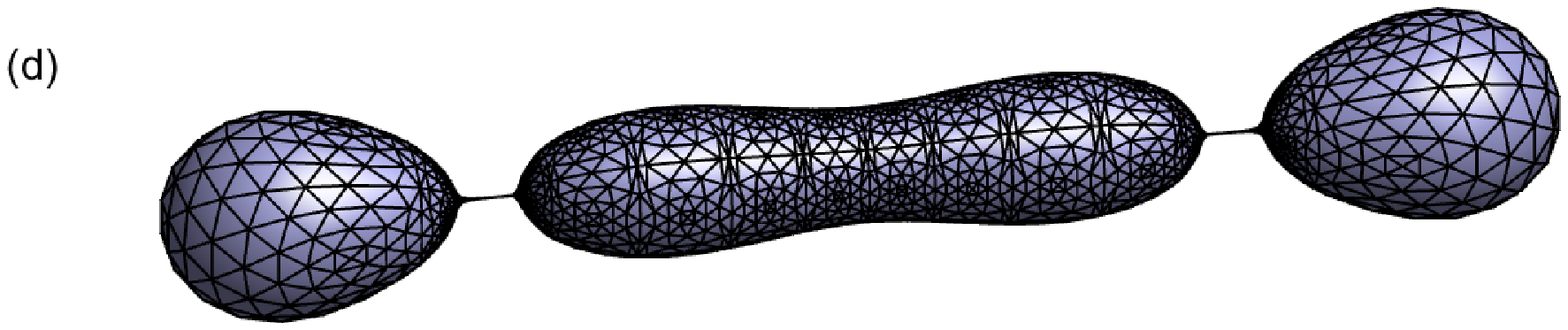}
\end{minipage}
\caption{Several snapshots in the evolution of an initial $(16,~1,~1)$ cuboid until its pinch-off. (a) $t=0$; (b) $t=0.01$; (c) $t=0.4$; (d) $t=0.63$.}
\label{fig:prism16}
\end{figure}

In the last example, we apply our numerical method SP-PFEM to the evolution of a long cuboid of size $(16,1,1)$. We use the computational parameters: $K=2114, J=4224$ and $\tau=10^{-3}$. The numerical results are reported in Fig.~\ref{fig:prism16}, where we observe the formulations of two singularities during the evolution. We note here the pinch-off time is $t=0.630$, which differs slightly from the previous result in \cite{Bansch05} ($t=0.669$).
The discrepancy may be due to the mesh regularization errors or the volume loss for their numerical solutions.

\section{Conclusions}\label{se:con}
We proposed a structure-preserving parametric finite element method (SP-PFEM) for the surface diffusion flow of a 2D curve and 3D surface. The numerical method was based on the discretization of a weak formulation that allows the tangential velocity \cite{Barrett07}. We adopted a ``weakly'' implicit (or almost semi-implicit) discretization in time and piecewise linear elements in space. The key ingredient is that we defined a new vector on average to approximate the unit normal by using the information at the current and next time step. In this sense, the numerical method yielded the good properties of area\slash volume conservation, unconditional stability and good mesh quality. The numerical discretization is ``weakly'' nonlinear in the sense that only one nonlinear term of polynomial form is introduced in each equation of the discrete system, which can be efficiently and accurately solved by the Newton's iterative method.

We assessed the accuracy and convergence of the SP-PFEM {    by numerical tests and it is illustrated} that the order of convergence in spatial discretization can reach about 2 as the mesh size is refined. Various numerical experiments were carried out to verify the good properties of the SP-PFEM. In all, our numerical method provides a reliable and powerful tool for the simulation of surface diffusion flow for 2D curve and 3D surface.

We remark here that the SP-PFEM \eqref{eqn:dis2d} in 2D and \eqref{eqn:3dpfem} in 3D can be straightforwardly extended to the anisotropic surface diffusion flow based on the works in \cite{Barrett20, Zhao19b, Li2020Energy}, the volume-preserving mean curvature flows \cite{Huisken1987volume}, and other curvature driven flows that preserve the volume. Of course, these extensions are required further investigation in terms of preserving the mesh quality, especially the asymptotic equal mesh distribution.


\bibliographystyle{siamplain}
\bibliography{thebib}
\end{document}